\definecolor{colormy}{rgb}{0.8,0.05,0.05}
\definecolor{mycolor}{rgb}{0.25,0.99,0.25}
\tikzstyle directed=[postaction={decorate,decoration={markings,
    mark=at position #1 with {\arrow{>}}}}]
\tikzstyle rdirected=[postaction={decorate,decoration={markings,
    mark=at position #1 with {\arrow{<}}}}]
\newcommand{\Hom}{\mathrm{Hom}}
\newcommand{\Ext}{\mathrm{Ext}}
\newcommand{\mi}{\mathrm{min}}
\newcommand{\spa}{\mathrm{span }}
\newcommand{\supp}{\mathrm{supp }}
\newcommand{\rad}{\mathrm{rad }}
\newcommand{\im}{\mathrm{Im}}
\def\C{{\mathbb C}}
\def\Z{{\mathbb Z}}
\def\cha{\mathrm{ch}}
\theoremstyle{definition}
\newtheorem{thm}{Theorem}[section]
\newtheorem{cor}[thm]{Corollary}
\newtheorem{lem}[thm]{Lemma}
\newtheorem{prop}[thm]{Proposition}
\theoremstyle{definition}
\newtheorem{examplecounter}{Example}
\newtheorem{exa}[thm]{Example}
\numberwithin{equation}{section}
\declaretheorem[style=definition,name=Definition,qed=$\blacktriangle$,numberlike=thm]{defn}
\declaretheorem[style=definition,name=Remark,qed=$\blacktriangle$,numberlike=thm]{rem}
\title{Character formulas in category $\mathcal O_p$}
\author{Henning Haahr Andersen}
\address{Centre for Quantum Mathematics (QM), Imada,
SDU, Denmark}
\email{h.haahr.andersen@gmail.com}
\date{}							
\begin{document}

\begin{abstract}
Let $\mathcal O_p$ denote the characteristic $p>0$ version of the ordinary category $\mathcal O$ for a semisimple complex Lie algebra. In this paper we give some (formal) character formulas in $\mathcal O_p$. First we concentrate on the irreducible characters. Here we give explicit formulas for how to obtain all irreducible characters from the characters of the finitely many restricted simple modules as well as the characters of a small number of infinite dimensional simple modules in $\mathcal O_p$ with specified highest weights. We next prove a strong linkage principle for Verma modules which allow us to split $\mathcal O_p$ into a finite direct sum of linkage classes. There are corresponding translation functors and we use these to further cut down the set of irreducible characters needed for determining all others. Then we show that the twisting functors on $\mathcal O$ carry over to twisting functors on $\mathcal O_p$, and as an application we prove a character sum formula for Jantzen-type filtrations of Verma modules with antidominant highest weights. Finally, we record formulas relating the characters of the two kinds of tilting modules in $\mathcal O_p$.

\end{abstract}

\maketitle

\section{Introduction}
Let $\mathfrak g$ be a semisimple complex Lie algebra. Consider a field $K$ of characteristic $p >0$, 
and let $G$ be the semisimple algebraic group over $K$ corresponding to $\mathfrak g$. Denote by $U_K$ the hyperalgebra over $K$ for $G$, see \cite{RAG}, I.7.7. In \cite{An22} we proved some basic properties of the category $\mathcal O_p$ for $U_K$ as well as for the corresponding quantum version $\mathcal O_q$, when $q$ is a root of unity in $K$. Here we have chosen to focus on $\mathcal O_p$ although many (maybe all) of our results work for $\mathcal O_q$ as well.  We shall use the notation and setup from \cite{An22}, but simplify notation by dropping the bars, i.e. writing just $\mathcal O_p, L_p(\lambda)$, etc. instead of $\bar{\mathcal O_p}, \bar{L}_p(\lambda)$, etc. Along the way we recall some of the key results from \cite{An22} that we need, referring to that paper for further statements and details. Additional background and results can be found in \cite{Di} and \cite{PF}.

We shall use in our context the general concept of (formal) characters introduced in \cite{DGK}, Section 3. This involves in particular working with a completion of the character ring for finite dimensional modules in $\mathcal O_p$, cf. also \cite{PF}, Section 2.5.   Our aim is to deduce some explicit formulas which allow us to determine characters for general modules in $\mathcal O_p$ from the knowledge of the characters of a finite number of simple modules. The necessary background and methods are dealt with in Section 2 where we have also deduced the mentioned formulas for irreducible characters.

Then in Section 3 we explore further the linkage principle for Verma modules from \cite{An22}. We prove a stronger version of this principle using the concept from \cite{DGK} of components (or composition factors) of modules in $\mathcal O_p$. This allow us to decompose $\mathcal O_p$ into a direct sum of linkage classes, one for each of the (finitely many) weights in the closure of the fundamental alcove $\bar C$ for the affine Weyl group. We can then introduce corresponding translation functors on $\mathcal O_p$. As a consequence we then demonstrate that when combined with our formulas in Section 2 we can obtain all irreducible characters from the knowledge of one such character for each facet in $\bar C$.

Another tool available in the study of $\mathcal O$ is the socalled twisting functors, see \cite{Ark}, \cite{AL}, \cite{AS}. In Section 4 we show that we have analogous twisting functors on $\mathcal O_p$ and in Section 5 we use these to deduce a character sum formula for Jantzen-type filtrations of Verma modules with antidominant highest weights. Finally, in Section 6 we give some formulas for tilting characters. In particular, we record the fact, that at least for $p \geq 2h-2$ the characters of $\infty$-tilting modules are determined by the characters of finite dimensional tilting modules.

\section{Characters of simple modules in $\mathcal O_p$} 

Let $n$ denote the rank of $\mathfrak g$ and set $X = \Z^n$. We write $R \subset X$ for the root  system for $\mathfrak g$ and choose a set of positive roots $R^+$. Then we denote by $S = \{\alpha_1, \alpha_2, \cdots , \alpha_n\}$ the set of simple roots in $R^+$. We let $W$ denote the Weyl group for $R$. Then $W$ acts naturally on $X$. We shall also consider the ``dot''-action of $W$ on $X$ given by $w \cdot \lambda = w(\lambda + \rho) - \rho$, $\lambda \in X$. Here $\rho$ is half the sum of all positive roots.

The order on $X$ induced by $S$ is denoted $\leq$ and if $\mu \in X$ we set
$$ X(\leq \mu)= \{\lambda \mid \lambda \leq \mu \}.$$
Observe that for any $\lambda, \mu \in X$ the interval $\{\nu \in X \mid \lambda \leq \nu \leq \mu \}$ is always finite (possibly empty).

\subsection{The category $\mathcal O_p$}
Recall that $U_K$ has a triangular decomposition $U_K = U_K^- U_K^0 U_K^+$, cf. \cite{An22}, and set $B_K = U_K^0 U_K^+$. Using the above notation we may define $\mathcal O_p$ as follows.
\begin{defn} \label{O}
The category $\mathcal O_p$ is the full subcategory of the category of $U_K$-modules consisting of those $M$, which satisfy
\begin{enumerate}
\item $M$ is a weight module, i.e. $M = \bigoplus_{\lambda \in X} M_\lambda$ as a $U_K^0$-module.
\item $\dim_k (M_\lambda) < \infty$ for all $\lambda$, and there exist finitely many $\mu_1, \mu_2, \cdots , \mu_r \in X$ such that if $M_\lambda \neq 0$ then  $\lambda \in X(\leq \mu_1) \cup X(\leq \mu_2) \cup \cdots  \cup X(\leq \mu_r)$.
\end{enumerate}
\end{defn}
\begin{rem} \begin{enumerate}
\item This is equivalent to \cite{An22}, Definition 2.1 once one observes that condition (3) in that definition is contained in conditions (1) and (2) above. In fact, let $m \in M$ be non-zero. To prove that $\dim (B_K m) < \infty$ it is by (1) enough to verify this for $m \in M_\lambda$ for some $\lambda \in X$. However, $B_K m \subset \oplus_{\lambda \leq \mu} M_\mu$ and if $M_\mu \neq 0$ we know by (2) that $\mu \leq \mu_i$ for some $i$. Hence the non-zero terms in this direct sum only involves $\mu$'s belonging to the finitely many intervals $[\lambda, \mu_i], \; i = 1, 2, \cdots r$. By (2) all weight spaces $M_\mu$ for $M$ are finite dimensional, so we see that $\dim_K B_K m$ is indeed finite dimensional. 
\item In Definition 2.1 (2) we may always choose the $\mu_i$'s in different equivalences classes for $\Z R$ (cf. the arguments verifying Remark \ref{rem} below). In particular, the number $r$ can always be taken to be at most equal to the index of connection $|X:\Z R|$ for all $M$.
\end{enumerate}
\end{rem}

Observe that $\mathcal O_p$ is an abelian category. Moreover, it is also closed under taking (finite) tensor products. If $M, N \in \mathcal O_p$ have weights contained in $\cup_{i = 1}^r X({\leq \mu_i})$ and $\cup_{j = 1}^s X({\leq \nu_j})$, respectively, then the weights of $M \otimes_K N$ are contained in $ \cup_{i,j} X(\leq \mu_i + \nu_j)$.

Among the modules in $\mathcal O_p$ we have the finite dimensional modules. These form a subcategory $\mathcal C$ of $\mathcal O_p$, which identifies with the category of finite dimensional rational $G$-modules (studied in full details in \cite{RAG}).

\subsection{Formal characters}\label{characters}

Using the general concept ``formal characters'' introduced in \cite{DGK}, Section 3 we shall now define characters of the modules in $\mathcal O_p$. The same approach is also used in \cite{PF}, Section 2.5. Note that here and in the future we  write just characters (dropping the word ``formal'').

If $f: X \rightarrow \Z$ is a map then we set $\supp (f) = \{\lambda \in X \mid f(\lambda) \neq 0 \}$. We define
$$ \widehat{\Z[X]} = \{f: X \rightarrow \Z \mid \text { there exist } \mu_1, \cdots , \mu_r \in X \text { such that } \supp(f) \subset X(\leq \mu_1) \cup \cdots \cup X(\leq \mu_r) \}.$$

Let $M \in \mathcal O_p$. Then we define the character of $M$ to be the map $  \cha M : X \rightarrow \Z$ given by
$$ \cha M (\lambda) = \dim_K M_\lambda, \, \lambda \in X.$$
Note that the finiteness condition (2) in Definition \ref{O} ensures that $\cha M \in \widehat{\Z[X]}$. We have that $\supp(\cha M)$ is the set of weights of $M$.

Pointwise addition makes $\widehat{\Z[X]}$ into an abelian group. 
Clearly, taking characters is additive, i.e. if
$ 0 \to M_1 \to M \to M_2 \to 0$
is a short exact sequence in $\mathcal O_p$, then $\cha M = \cha M_1 + \cha M_2$.

We also have a product $\star$ on $\widehat{\Z[X]}$. It is defined by
$$ f \star g (\lambda) = \sum_{\mu \in X} f(\mu) g(\lambda - \mu), \; \lambda \in X.$$
Observe that when $f, g \in \widehat{\Z[X]}$ the sum on the right hand side is finite. Hence $f \star g$ is a well defined element of $\widehat{\Z[X]}$. In this way $\widehat{\Z[X]}$ becomes a commutative ring. Its $1$-element is $\cha K$ where $K$ denotes the trivial $1$-dimensional  $U_K$-module.
Note also that if $M_1, M_2 \in \mathcal O_p$ then we have
$$ \cha (M_1 \otimes M_2) = (\cha M_1) \star (\cha M_2). $$

For later use we also record the fact that $\widehat{\Z[X]}$ is closed under certain infinite sums. Namely, suppose $(f_i)_{i \in I}$ is a family of elements in $\widehat{\Z[X]}$ which satisfies
\begin{enumerate}
\item there exist $\mu_1, \cdots , \mu_r \in X$ such that $\supp (f_i) \subset X(\leq \mu_1) \cup \cdots \cup X(\leq \mu_r)$ for all $i \in I,$
\item for any $\lambda \in X$ we have $\lambda \in \supp (f_i)$ for at most finitely many $i \in I$. 
\end{enumerate}
If (1) and (2) are satisfied the family $(f_i)_{i \in I}$ is  called {\it summable}. In that case 
the (possibly infinite) sum $\sum_{i \in I} f_i$ equals  the map $f \in \widehat{\Z[X]}$ defined by 
$$ f(\lambda) = \sum_{i \in I} f_i(\lambda), \; \lambda \in X.$$

Let $\lambda \in X$. Define $e^\lambda$ to be the characteristic function on $\{\lambda\} \subset X$, i.e. $e^\lambda (\mu) = \delta_{\mu, \lambda}$. Then for any $f \in \widehat{\Z[X]}$ the family
$(f(\lambda) e^\lambda)_{\lambda \in X}$ is summable (and has sum $f$). In fact, this  family is summable if and only if $f \in \widehat{\Z[X]}$.

Let $r \in \Z_{\geq 0}$. Then for any map $f: X \to \Z$ we define $f^{(r)} : X \to \Z$ by 
$$ f^{(r)} (\lambda) = f(p^r \lambda), \; \lambda \in X.$$
Clearly, $\supp f^{(r)} = p^r \supp (f)$. In particular, we see that $f  \in \widehat{\Z[X]}$ if and only if $f^{(r)} \in \widehat{\Z[X]}$.

Recall from \cite{An22}, 3.6 that for each $M \in \mathcal O_p$ we have the Frobenius twists $M^{(r)} \in \mathcal O_p$, $ r \geq 0$. Clearly, the characters of these twisted modules are given by 
$$ \cha M^{(r)} = (\cha M)^{(r)}.$$
We shall often write $\cha^{(r)} M$ instead of $\cha (M)^{(r)}$.

\subsection{Verma modules  and simple modules in $\mathcal O_p$}

Recall that the Verma module $\Delta_p(\lambda)$ in $\mathcal O_p$ with highest weight $\lambda \in X$ is defined by
$$ \Delta_p(\lambda) = U_K \otimes_{B_K} \lambda,$$
where $\lambda$ is considered a $B_K$-module via the natural projection $B_K \to U_K^0$. 
It follows immediately from this definition that as a $U^-_K U^0_K$-module we have $\Delta_p(\lambda) \simeq U_K^- \otimes_K \lambda$. Hence we get from the PBW-basis of $U_K^-$ the following character formula (with $e^\mu$ as in Section \ref{characters})
\begin{equation} \label{ch-Verma}
\cha \Delta_p(\lambda) = \sum_{\mu \leq \lambda} P(\lambda - \mu) e^\mu,
\end{equation}
where $P$ is the Kostant partition function.

Each Verma module $\Delta_p(\lambda)$ has a unique simple quotient which we denote $L_p(\lambda)$. Then clearly the weights of $L_p(\lambda)$ are also $\leq \lambda$, and the multiplicity of $\lambda$ as a weight of $L_p(\lambda)$ is $1$. Up to isomorphisms the set $\{L_p(\lambda) \mid \lambda \in X \}$ is the family of all simple modules in $\mathcal O_p$.

\subsection{Steinberg's tensor product theorem and character formulas}
As in \cite{An22} we set
$$ X^+ = \{\lambda \in X \mid \langle \lambda, \alpha_i^\vee \rangle \geq 0 \text { for all } i\},$$
and for $r \geq 0$
$$X_r = \{\lambda \in X \mid 0 \leq \langle \lambda, \alpha_i^\vee \rangle < p^r \text { for all } i\}.$$
$X^+$ is the set of dominant weights and the elements of $X_r$ are called the $p^r$-restricted  weights (here we have deviated from the notation used in \cite{An22}).

For a fixed $r$ we can for each $\lambda \in X$ write $\lambda = \lambda^0 + p^r \lambda^1$ for unique $\lambda^0 \in X_r$ and $\lambda^1 \in X$. Note that $\lambda^0$ and $\lambda^1$ depend on $r$. In the formulas below it should be clear from the context which $r$ we are working with (special care should be taken in Theorem \ref{dom} below where we have abused notation further).

Steinberg's tensor product theorem, Theorem 3.6  in \cite{An22}, for $\mathcal O_p$ then says that we have an isomorphism
\begin{equation} \label{st} L_p(\lambda) \simeq L_p(\lambda^0) \otimes L_p(\lambda^1)^{(r)}.\end{equation}

Recall that $L_p(\lambda)$ is finite dimensional if and only if $\lambda \in X^+$. For such $\lambda$ we write $\lambda = \lambda^0 + p \lambda^1 + \cdots + p^s \lambda^s$ with $\lambda^i \in X_1$ for all $i = 1, \cdots , s$ and get  the following well known formula for the characters of finite dimensional simple modules in $\mathcal O_p$ (i.e for the  finite dimensional simple $G$-modules), cf. \cite{St}.

\begin{thm} \label{dom} Let $\lambda \in X^+$. Then with the above notation
$$ \cha L_p(\lambda) = \cha L_p(\lambda^0) \star \cha^{(1)} L_p(\lambda^1)\star \cdots \star  \cha^{(s)} L_p(\lambda^{s}
).$$
\end{thm}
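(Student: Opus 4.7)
The plan is to prove the formula by induction on $s$, using Steinberg's tensor product theorem \eqref{st} as the sole algebraic input and translating it into characters via the two compatibilities established in Section \ref{characters}: that $\cha(M \otimes N) = \cha M \star \cha N$ and that $\cha M^{(r)} = (\cha M)^{(r)}$.

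The base case $s = 0$ is tautological: if $\lambda \in X_1$ then $\lambda = \lambda^0$ and the right-hand side reduces to $\cha L_p(\lambda^0)$. For the inductive step, given $\lambda \in X^+$ with $p$-adic expansion $\lambda = \lambda^0 + p\lambda^1 + \cdots + p^s \lambda^s$, set
$$\mu = \lambda^1 + p \lambda^2 + \cdots + p^{s-1} \lambda^s,$$
so that $\lambda = \lambda^0 + p \mu$ with $\lambda^0 \in X_1$ (this is the decomposition appearing in \eqref{st} for $r = 1$). Since $\mu \in X^+$ whenever $\lambda \in X^+$, Steinberg's tensor product theorem gives
$$L_p(\lambda) \simeq L_p(\lambda^0) \otimes L_p(\mu)^{(1)},$$
and applying the character map yields
$$\cha L_p(\lambda) = \cha L_p(\lambda^0) \star \cha^{(1)} L_p(\mu).$$
By the induction hypothesis applied to $\mu$, whose $p$-adic expansion has length $s-1$ with digits $\lambda^1, \ldots, \lambda^s$, we have
$$\cha L_p(\mu) = \cha L_p(\lambda^1) \star \cha^{(1)} L_p(\lambda^2) \star \cdots \star \cha^{(s-1)} L_p(\lambda^s).$$

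It then remains to apply the Frobenius twist $(-)^{(1)}$ to this identity and shift the indices by one. This uses two easy compatibilities of the operation $f \mapsto f^{(r)}$ with the ring structure on $\widehat{\Z[X]}$: first, $(f^{(1)})^{(r)} = f^{(r+1)}$, which is immediate from the defining formula $f^{(r)}(\lambda) = f(p^r \lambda)$; and second, $(f \star g)^{(1)} = f^{(1)} \star g^{(1)}$, which follows either by a direct computation using that $\supp f^{(1)}, \supp g^{(1)} \subset pX$, or more conceptually from the module-level isomorphism $(M \otimes N)^{(1)} \simeq M^{(1)} \otimes N^{(1)}$ combined with the two basic compatibilities recalled above. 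Combining these ingredients with the display for $\cha L_p(\mu)$ gives the desired formula.

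The argument is essentially routine bookkeeping; no step presents a genuine obstacle once Steinberg's theorem \eqref{st} is granted. The only point requiring minor care is the management of the Frobenius index $r$: one must make sure to apply \eqref{st} for $r = 1$ at each step of the induction (so that the ``low'' factor is $p$-restricted at every stage) rather than trying to peel off all $s$ digits at once, and one must track that iterating the twist $(1)$ on a character already twisted by $(r)$ produces $(r+1)$ rather than something else.
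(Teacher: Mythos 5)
Your proof is correct and follows exactly the route the paper takes: the paper's one-line proof is to iterate the $s=1$ case of Steinberg's tensor product theorem and apply the character compatibilities from Section \ref{characters}, which is precisely your induction. You have simply spelled out the bookkeeping (the index shift under the Frobenius twist and the multiplicativity of $\star$) that the paper leaves implicit.
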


 \begin{proof} Iterate the $s=1$ case of (\ref{st}) and use the formulas from Section \ref{characters}.
 \end{proof}
 
 Suppose next $\lambda \in X$ is arbitrary. Set 
 $$ Y = \{\omega \in X \mid \langle \omega, \alpha_i^\vee \rangle \in \{-1, 0\} \text { for all } i= 1, \cdots, n\}.$$
Then we have

\begin{thm} \label{gen} For $\lambda \in X$ we have for all  $r\geq \mi\{s \mid -p^s > \langle \lambda, \alpha_i^\vee \rangle > p^s \text{ for all } i\}$ that $\lambda = \lambda^0 + p^r \omega$ for unique $\lambda^0 \in X_r$ and $ \omega \in Y$, and  
$$ \cha L_p(\lambda) = \cha L_p(\lambda^0) \star \cha^{(r)} L_p(\omega).$$
\end{thm}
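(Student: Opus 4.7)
The plan is to first establish the existence and uniqueness of the decomposition $\lambda = \lambda^0 + p^r \omega$, and then to obtain the character identity as a direct application of the generalized Steinberg tensor product theorem (\ref{st}). Both pieces reduce to routine computations once one observes that the stated condition on $r$ is exactly what is needed to extend a base-$p^r$ expansion so that the ``digit'' in position $r$ may lie in $\{-1, 0\}$ rather than in $\{0, 1, \ldots, p^r - 1\}$.

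For the decomposition I would work coordinate-wise through the identification $X \cong \Z^n$ furnished by the pairings $\lambda \mapsto (\langle \lambda, \alpha_i^\vee \rangle)_{i=1}^n$. The hypothesis on $r$ amounts to $|\langle \lambda, \alpha_i^\vee \rangle| < p^r$ for every $i$, so each integer $n_i := \langle \lambda, \alpha_i^\vee \rangle$ can be written uniquely as $n_i = m_i + p^r \epsilon_i$ with $m_i \in \{0, 1, \ldots, p^r - 1\}$ and $\epsilon_i \in \{-1, 0\}$: take $(m_i, \epsilon_i) = (n_i, 0)$ when $n_i \geq 0$ and $(n_i + p^r, -1)$ when $n_i < 0$, the latter pair being admissible precisely because $0 < n_i + p^r < p^r$. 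Uniqueness is the usual division-with-remainder argument: any two candidate choices differ by $p^r$ times the difference of two elements of $\{-1, 0\}$, while that same quantity must also be the difference of two elements of $\{0, \ldots, p^r-1\}$, and the only way both constraints can be met is by equality. Reassembling the coordinates produces the unique $\lambda^0 \in X_r$ and $\omega \in Y$ with $\lambda = \lambda^0 + p^r \omega$.

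For the character formula I would invoke Steinberg's tensor product theorem (\ref{st}), which via \cite{An22}, Theorem 3.6, applies to any $\lambda^1 \in X$ (not only to dominant weights) and therefore to $\lambda^1 = \omega \in Y$; this yields $L_p(\lambda) \simeq L_p(\lambda^0) \otimes L_p(\omega)^{(r)}$. Taking characters, multiplicativity with respect to tensor products together with the identity $\cha M^{(r)} = \cha^{(r)} M$ from Section \ref{characters} delivers the claimed formula. The only point deserving any attention is that $\omega$ need not be dominant, so one cannot appeal to the more familiar form of Steinberg used to prove Theorem \ref{dom}; apart from this, there is no substantive obstacle, and I expect the whole proof to occupy only a handful of lines.
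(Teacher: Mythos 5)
Your proposal is correct and follows the same route as the paper, whose proof consists of the single line that the statement is an immediate consequence of the generalized Steinberg tensor product theorem (\ref{st}); you simply fill in the coordinate-wise existence/uniqueness of the decomposition $\lambda = \lambda^0 + p^r\omega$ (correctly reading the hypothesis on $r$ as $|\langle\lambda,\alpha_i^\vee\rangle| < p^r$) and the observation that (\ref{st}) holds for arbitrary, not necessarily dominant, $\lambda^1 \in X$. Nothing further is needed.
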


\begin{proof} This is an immediate consequence of (\ref{st}).
\end{proof} 

\begin{rem}
Let $\omega \in Y$. Then $(1-p^r)\omega \in X_r$ and hence Steinberg's tensor product theorem tells us that $L_p(\omega) \simeq L_p((1-p^r)\omega) \otimes L_p(\omega)^{(r)}$. Hence 
$ \cha L_p(\omega) = \cha L_p((1-p^r)\omega) \star \cha^{(r)} \cha L_p(\omega)$.
\end{rem}

Observe that if $\lambda$ is antidominant (i.e. $\langle \lambda, \alpha_i^\vee \rangle < 0$ for all $i= 1, \cdots , n$), then the $\omega \in Y$ appearing in Theorem \ref{gen} equals $- \rho$. According to \cite{An22}, Theorem 6.1 we have $L_p(-\rho) = \Delta_p(\-\rho)$. Hence in this case Theorem \ref{gen} gives the following formula.

\begin{cor} \label{anti} Let $\lambda \in X$ be antidominant and take $r$ so large that $p^r \rho + \lambda \in X^+$. Then we have 
$$ \cha L_p(\lambda) = \cha L_p(p^r\rho + \lambda) \star \cha^{(r)} L_p(-\rho) =  \cha L_p(p^r \rho + \lambda) \star (\sum_{\mu \geq 0} P( \mu) e^{- p^r(\rho + \mu)}).$$
\end{cor}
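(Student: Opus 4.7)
The plan is to specialize Theorem \ref{gen} to the antidominant case and then rewrite the resulting Frobenius-twisted factor using the identification $L_p(-\rho)=\Delta_p(-\rho)$ together with the Verma character formula \eqref{ch-Verma}.

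First I would pin down the decomposition $\lambda = \lambda^0 + p^r\omega$ supplied by Theorem \ref{gen} when $\lambda$ is antidominant. Pairing with each $\alpha_i^\vee$ gives
$$\langle \lambda, \alpha_i^\vee\rangle = \langle \lambda^0, \alpha_i^\vee\rangle + p^r \langle \omega, \alpha_i^\vee\rangle.$$
Since $\langle \omega, \alpha_i^\vee\rangle \in \{-1, 0\}$ while $0 \leq \langle \lambda^0, \alpha_i^\vee\rangle < p^r$, the antidominance hypothesis $\langle \lambda, \alpha_i^\vee\rangle < 0$ rules out $\langle \omega, \alpha_i^\vee\rangle = 0$, forcing $\langle \omega, \alpha_i^\vee\rangle = -1$ for every $i$, i.e.\ $\omega = -\rho$. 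The remaining assertion $\lambda^0 = p^r \rho + \lambda$ and the constraint $\lambda^0 \in X_r$ then amount exactly to the hypothesis $p^r\rho + \lambda \in X^+$ combined with antidominance. Substituting into Theorem \ref{gen} yields the first equality of the corollary.

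For the second equality, I would invoke \cite{An22}, Theorem 6.1 (quoted just before the corollary) to identify $L_p(-\rho) = \Delta_p(-\rho)$, and then apply \eqref{ch-Verma}. Re-indexing the resulting sum by $\nu = -\rho - \mu \geq 0$ produces
$$\cha L_p(-\rho) = \cha \Delta_p(-\rho) = \sum_{\nu \geq 0} P(\nu)\, e^{-\rho - \nu}.$$
The $r$-th Frobenius twist scales each weight $-\rho - \nu$ to $-p^r(\rho + \nu)$, so
$$\cha^{(r)} L_p(-\rho) = \sum_{\nu \geq 0} P(\nu)\, e^{-p^r(\rho + \nu)},$$
and combining with the first equality yields the second. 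The only non-routine step is the short bookkeeping argument pinning down $(\lambda^0, \omega) = (p^r\rho + \lambda, -\rho)$ in the antidominant case; after that the statement is a direct specialization of Theorem \ref{gen}.
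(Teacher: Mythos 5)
Your proof is correct and follows essentially the same route as the paper: specialize Theorem \ref{gen}, observe that antidominance forces $\omega=-\rho$ (the paper states this without the pairing computation you supply), invoke $L_p(-\rho)=\Delta_p(-\rho)$ from \cite{An22}, Theorem 6.1, and expand via the Verma character formula \eqref{ch-Verma} and the Frobenius twist. Your bookkeeping identifying $(\lambda^0,\omega)=(p^r\rho+\lambda,-\rho)$ is a welcome filling-in of a step the paper leaves implicit.
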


\begin{rem}
\begin{enumerate}
\item Assume that $\cha L_p(\nu)$ are known for all $\nu$ in the finite set $ X_1$ (this is equivalent to knowing all irreducible $G_1
T$ characters, $G_1$ being the Frobenius subgroup scheme of $G$ and $T$ is a maximal torus in $G$). Then Theorem \ref{dom}, respectively Corollary \ref{anti}, gives explicit formulas for how to obtain from this the characters of all simple modules in $\mathcal O_p$ with dominant, respectively antidominant, highest weights.
\item Assume that $\cha L_p(\nu)$ are known for all $\nu$ in the finite set $ X_1 \cup Y$. Then Theorem \ref{gen} gives explicit formulas for how to obtain from this the characters of all other simple modules in $\mathcal O_p$.
\item Recall that \cite{An22}, Corollary 3.10 gives recipes for determining all the remaining irreducible characters in $\mathcal O_p$ from  the assumption in (1) above, cf. also the periodicy theorem in \cite{PF}. However, in contrast to our formulas above,  these recipes work one weight at a time.
\item The recent  breakthrough, see  \cite{RW1}, determines for $p \leq 2h-1$ the characters $\cha L_p(\nu))$ for all $\nu \in X^+$
 in terms of the $p$-Kazhdan-Lusztig polynomials. Hence by (3) this leads to the determination of all simple characters in $\mathcal O_p$. 
 \end{enumerate}
 \end{rem}

\section{The linkage principle in $\mathcal O_p$}
In this section we shall show that $\mathcal O_p$ breaks up into a direct sum of subcategories corresponding to the orbits in $X$ of the affine Weyl group. This result is analogous to the splitting of the ordinary  category $\mathcal O$, and it is an even closer analogue of the situation in the category $\mathcal C$ of finite dimensional rational representations of $G$, see \cite{RAG}, II.6. As an immediate consequence of this result we get translation functors  in $\mathcal O_p$ (as in $\mathcal O$ and $\mathcal C$). 

\subsection{Local composition series}
Modules  in $\mathcal O_p$ do in general not have composition series in the usual sense, see e.g. the examples in \cite{An22}, Section 5. However, at any given weight we have a local composition series in the following sense:

\begin{prop} \label{local} (\cite{DGK}, Proposition 3.2)
Let $M \in \mathcal O_p$ and $\lambda \in X$. Then there exists a finite filtration  
$$ 0 = M_0 \subset M_1 \subset \cdots \subset M_r = M$$
of submodules of $M$ and a subset $J \subset \{1, 2, \cdots , r\}$ such that
\begin{enumerate}
\item if $i \in J$ then $M_i/M_{i-1} \simeq L_p(\lambda_i)$ for some $\lambda_i \in X$,
\item if $i \not \in J$ then $(M_i/M_{i-1})_\mu = 0$ for all $\mu \geq \lambda$.
\end{enumerate}
\end{prop}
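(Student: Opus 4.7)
The plan is to induct on the finite integer
\[
n(M,\lambda) \;=\; \sum_{\mu \geq \lambda} \dim_K M_\mu,
\]
which is well defined because $M \in \mathcal O_p$: the set $\{\mu \in X \mid \mu \geq \lambda,\; M_\mu \neq 0\}$ is a finite union of intersections $[\lambda,\mu_i]$ with the $\mu_i$'s from Definition \ref{O}, each of which is finite, and all weight spaces are finite-dimensional.

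If $n(M,\lambda) = 0$, the trivial filtration $0 = M_0 \subset M_1 = M$ with $J = \emptyset$ already satisfies condition (2), and we are done. Otherwise, pick a weight $\mu$ of $M$ which is maximal in the partial order among those weights of $M$ satisfying $\mu \geq \lambda$ (such a $\mu$ exists since the set in question is finite and non-empty). Choose a non-zero vector $v \in M_\mu$. For any positive root vector $x \in U_K^+$ the vector $xv$ lies in a weight space strictly above $\mu$, hence above $\lambda$, which contradicts the maximality of $\mu$ unless $xv = 0$. Therefore $U_K^+ v = 0$, and the submodule $N := U_K v \subset M$ is a quotient of the Verma module $\Delta_p(\mu)$; in particular it is a highest weight module with $\dim N_\mu = 1$ and it has a unique maximal proper submodule $N'$, with $N/N' \simeq L_p(\mu)$.

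I would then apply the induction hypothesis twice. First, to the quotient $M/N$ at the same weight $\lambda$: since $N_\mu \neq 0$ we have $n(M/N,\lambda) \leq n(M,\lambda) - 1$, so there is a filtration $0 = \bar M_0 \subset \bar M_1 \subset \cdots \subset \bar M_s = M/N$ with an index set $J' \subset \{1,\ldots,s\}$ fulfilling (1) and (2). Pulling back along $\pi\colon M \to M/N$ gives a filtration $N = M_0'' \subset M_1'' \subset \cdots \subset M_s'' = M$ whose successive subquotients are those of $M/N$. Second, apply induction to the submodule $N'$: since $\dim N'_\mu = 0$ while $N'_\nu \leq M_\nu$ for every $\nu$, one gets $n(N',\lambda) \leq n(M,\lambda) - 1$, yielding a filtration $0 = N'_0 \subset \cdots \subset N'_t = N'$ with its own index set $J''$.

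The desired filtration of $M$ is then the concatenation
\[
0 = N'_0 \subset \cdots \subset N'_t = N' \subset N = M_0'' \subset M_1'' \subset \cdots \subset M_s'' = M,
\]
in which the step from $N'$ to $N$ contributes the simple quotient $L_p(\mu)$ (so its index goes into $J$), and the indices from $J''$ and (suitably shifted) $J'$ are adjoined. The verifications of (1) and (2) on each new subquotient are immediate from the inductive hypotheses and the construction. The only real subtlety, which I would flag as the main point to get right, is the choice of $\mu$ as maximal among weights of $M$ that are $\geq \lambda$, since this is what both forces $v$ to be a highest weight vector and makes the two inductive invocations strictly decrease $n(\cdot,\lambda)$; everything else is bookkeeping.
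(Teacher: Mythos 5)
Your proof is correct and is essentially the argument of \cite{DGK}, Proposition 3.2, which the paper cites in place of giving its own proof: induct on the finite quantity $\sum_{\mu \geq \lambda} \dim_K M_\mu$, extract a highest weight submodule $N$ generated by a vector of maximal weight $\geq \lambda$, and recurse on the maximal proper submodule $N'$ and on $M/N$, with the step $N' \subset N$ supplying the simple subquotient. The key points — that maximality of $\mu$ among weights $\geq \lambda$ forces $v$ to be a highest weight vector, and that both recursive calls strictly decrease the invariant — are exactly the ones the standard proof turns on, so there is nothing to add.
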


A filtration like the one in this proposition is called a {\it local composition series} for $M$ at $\lambda$.

\begin{examplecounter} \label{ex1} Let $G = SL_2(K)$. In this case $X = \Z$. Take $r \geq 0$. Then the results in \cite{An22}, Section 5.2 give
\begin{enumerate}
\item The Verma module $\Delta_p(0)$ has a filtration $0 = M_0 \subset M_1 \subset \cdots \subset M_{r+2} = \Delta_p(0)$ with $M_i/M_{i-1}  = L_p(-2p^{i-1}), \; i= 1, 2, \cdots , r$ and $M_{r+1} = \rad \Delta_p(0).$ 
This is a local composition series for $\Delta_p(0)$ at $-2p^r + 2$. The associated subset consisting of all $i$ for which $M_i/M_{i-1}$ is simple is $J = \{1, 2, \cdots r, r+2 \}.$
\item The Verma module $\Delta_p(-2)$ has a filtration $0 = N_0 \subset N_1 \subset \cdots \subset N_{r+1} = \Delta_p(2)$ with $N_1 = \Delta_p(-2)^{(r)}, \; N_i/N_{i-1}  = L_p(-2p^{r+1-i}), \; i= 2, \cdots , r+1$. This is a local decomposition series for $\Delta_p(-2)$ at $-2p^r + 2$ with associated subset $J = \{2, \cdots , r+1 \}.$
\end{enumerate}
\end{examplecounter}

\subsection{Composition factor multiplicities} \label{Comp. Fact.}
Recall from Section \ref{characters} the concept of summability of certain families of elements in $\widehat {\Z[X]}$. Using this it follows as in \cite{DGK}, Section 3 that for each $M \in \mathcal O_p$ there exist unique non-negative numbers $([M:L_p(\mu)])_{\mu \in X}$ such that the family  $([M:L_p(\mu)] \cha L_p(\mu))_{\mu \in X}$ is summable and the following formula holds in $\widehat {\Z[X]}$
$$ \cha M = \sum_{\mu \in X} [M:L_p(\mu)] \cha L_p(\mu).$$
Suppose $\mu_1, \cdots , \mu_r$ are elements of $X$ such that all weights of $M$ are contained in the set $X(\leq \mu_1) \cup \cdots \cup X(\leq \mu_r)$. Then we have 
$$ [M : L_p(\mu)] > 0 \text { implies } \mu \in X(\leq \mu_1) \cup \cdots \cup X(\leq \mu_r).$$
We shall call $[M:L_p(\mu)]$ the composition factor multiplicity of $L_p(\mu)$ in $M$ and say that $L_p(\mu)$ is a composition factor of $M$ whenever this number is non-zero. Note that this is slightly misleading as $M$ may not have a composition series in the usual sense, cf. Example \ref{ex1} above. In \cite{DGK} the term `simple component' is used instead of `composition factor'. 

Composition factors clearly have the following property.
\begin{rem}
For all $M \in \mathcal O_p$ the simple module $L_p(\lambda)$ is a composition factor of $M$ if and only if $L_p(\lambda)$ is a subquotient of $M$.
\end{rem}

Let now $\lambda, \mu \in X$ and consider $M = \Delta_p(\lambda)$.  Set $a_{\mu, \lambda} = [\Delta_p(\lambda):L_p(\mu)] \in \Z_{\geq 0}$. Then $a_{\mu, \lambda} = 0$ unless $\mu \leq \lambda$, $a_{\lambda, \lambda} =1$, and we have 
$$ \cha \Delta_p(\lambda) = \sum_\mu a_{\mu, \lambda} \cha L_p(\mu).$$
Fix now $\eta \in X$ and consider the (infinite) matrix $(a_{\mu, \lambda})_{\mu, \lambda \in X(\leq \eta)}$. Then by the above observations this is an upper triangular matrix (wrt the order $\leq$ on $(X(\leq \eta)$) with $1$'s on the diagonal. Hence it is invertible and we denote its inverse by $(c_{\mu, \lambda})_{\mu, \lambda \in X(\leq \eta)}$. This is also an upper triangular matrix with $1$'s on the diagonal and with all entries in $\Z$. If we take $\eta \geq \lambda$ then we have
$$  \cha L_p(\lambda) = \sum_\mu c_{\mu, \lambda} \cha \Delta_p(\mu).$$
These equations make explicit how we can obtain the irreducible characters in $\mathcal O_p$ once we know the composition factor multiplicities of Verma modules - and vice versa. Note that the integers 
$a_{\mu, \lambda}$ and $c_{\mu, \lambda}$ are independent of $\eta$ (as long as $\lambda \leq \eta$).

\subsection{The strong linkage principle for Verma modules}

Recall the following result from \cite{An22}, Theorem 4.5
\begin{thm} \label{filt} Let $\lambda \in X$ and $r \in Z_{\geq 0}$. Then $\Delta_p(\lambda)$ has a finite filtration with subquotients isomorphic to $L_p(\mu^0) \otimes \Delta_p(\mu^1)^{(r)}$ for some $\mu \in X$. Moreover, if  $L_p(\mu^0) \otimes \Delta_p(\mu^1)^{(r)}$ occurs at least once as such a subquotient, then $\mu$ is strongly linked to $\lambda$.
\end{thm}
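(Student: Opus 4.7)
The plan is to adapt the classical construction from \cite{RAG}, II.9 for finite-dimensional induced modules to the $\mathcal O_p$-setting. The two ingredients are (a) a Frobenius-induction identity relating $\Delta_p(\lambda)$ to the $r$-th baby Verma module $\hat Z_r(\lambda) = \mathrm{Dist}(G_r)\otimes_{\mathrm{Dist}(B_r)} K_\lambda$, and (b) the strong linkage principle for $\hat Z_r(\lambda)$.

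First I would establish the identity
$$ \Delta_p(\lambda) = U_K \otimes_{B_K} K_\lambda \;\cong\; U_K \otimes_{\mathrm{Dist}(G_r B)} \hat Z_r(\lambda), $$
which follows from transitivity of tensor product together with the decomposition $\mathrm{Dist}(G_r B) \cong \mathrm{Dist}(G_r) \otimes_{\mathrm{Dist}(B_r)} B_K$ coming from PBW. Since $\hat Z_r(\lambda)$ is \emph{finite-dimensional} as a $G_r T$-module, it admits a genuine finite composition series
$$ 0 = N_0 \subset N_1 \subset \cdots \subset N_s = \hat Z_r(\lambda), $$
whose subquotients are the irreducible $G_r T$-modules $\hat L_r(\mu_i) \cong L_p(\mu_i^0) \otimes K_{p^r \mu_i^1}$ (see \cite{RAG}, II.9.6 for this identification). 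Applying the functor $U_K \otimes_{\mathrm{Dist}(G_r B)}(-)$, which is exact because $U_K$ is free over $\mathrm{Dist}(G_r B)$ on the PBW basis provided by the Frobenius-twisted negative hyperalgebra, then produces a finite filtration of $\Delta_p(\lambda)$ with $s$ steps. The final ingredient is the tensor identity
$$ U_K \otimes_{\mathrm{Dist}(G_r B)} \bigl(L_p(\mu^0) \otimes K_{p^r \mu^1}\bigr) \;\cong\; L_p(\mu^0) \otimes \Delta_p(\mu^1)^{(r)}, $$
which one verifies by pulling the restricted factor $L_p(\mu^0)$ (already a $U_K$-module) out of the induction using the coproduct on $U_K$, and then recognising $U_K \otimes_{\mathrm{Dist}(G_r B)} K_{p^r \mu^1}$ as the Frobenius twist $\Delta_p(\mu^1)^{(r)}$.

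For (b), by the construction above it suffices to show that every composition factor $\hat L_r(\mu)$ of the baby Verma $\hat Z_r(\lambda)$ has $\mu$ strongly linked to $\lambda$. This is Jantzen's strong linkage principle for $G_r T$-modules (\cite{RAG}, II.9.14), proved via his sum formula for $\hat Z_r(\lambda)$; under the decomposition $\mu = \mu^0 + p^r \mu^1$ the strong-linkage relation is preserved by the Frobenius twist, giving the desired conclusion for the subquotients $L_p(\mu^0) \otimes \Delta_p(\mu^1)^{(r)}$ of $\Delta_p(\lambda)$.

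The main obstacle, in my view, is the careful verification that the induction functor $U_K \otimes_{\mathrm{Dist}(G_r B)}(-)$ behaves as in the finite-dimensional setting when the resulting module is required to lie in $\mathcal O_p$ rather than among rational $G$-modules. Exactness, PBW freeness, and the tensor identity all work in the same way as in the algebraic-group case, but each needs to be re-checked because the induced modules are now infinite-dimensional, and one in particular has to confirm that they satisfy the weight-support condition of Definition \ref{O}. Once these formal properties are in place, the proof is essentially Jantzen's finite-dimensional argument transported to $\mathcal O_p$.
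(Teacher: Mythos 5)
This theorem is only \emph{recalled} in the present paper from \cite{An22}, Theorem 4.5, and your argument is essentially the proof given there: realize $\Delta_p(\lambda)$ as $U_K\otimes_{\mathrm{Dist}(G_rB)}\hat Z_r(\lambda)$, push a composition series of the finite-dimensional baby Verma module through the exact (PBW-free) induction functor, identify the induced simples via the tensor identity and the isomorphism $U_K\otimes_{\mathrm{Dist}(G_rB)}K_{p^r\mu^1}\cong\Delta_p(\mu^1)^{(r)}$, and invoke the strong linkage principle for $G_rT$, which directly yields $\mu\uparrow\lambda$ for the weight $\mu=\mu^0+p^r\mu^1$ labelling each subquotient. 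Two cosmetic remarks: the composition series must be taken by $\mathrm{Dist}(G_rB)$-submodules rather than merely $G_rT$-submodules (harmless, since the simple $G_rB$- and $G_rT$-modules coincide), and your closing worry about the support condition of Definition \ref{O} is vacuous, because the induced modules arise as subquotients of a filtration of $\Delta_p(\lambda)$ itself and hence lie in $\mathcal O_p$ automatically.
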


We deduce the following corollary
\begin{cor} \label{SLP} Let $\lambda, \mu \in X$. If $[\Delta_p(\lambda):L_p(\mu)] \neq 0$ then $\mu $ is strongly linked to $\lambda$.
\end{cor}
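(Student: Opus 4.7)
The plan is to argue by induction on $\lambda - \mu \in \N R^+$, a well-founded order because intervals in $X$ are finite. The base case $\mu = \lambda$ will be trivial. For the inductive step I would fix $r \geq 1$ and invoke Theorem \ref{filt} to obtain a finite filtration of $\Delta_p(\lambda)$ whose subquotients are $L_p(\nu^0) \otimes \Delta_p(\nu^1)^{(r)}$ with $\nu = \nu^0 + p^r\nu^1$ strongly linked to $\lambda$. Using that characters are additive on short exact sequences and that composition factor multiplicities in $\widehat{\Z[X]}$ are determined by characters (Section \ref{Comp. Fact.}), $L_p(\mu)$ must appear as a composition factor of at least one such subquotient.

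A direct character computation via Steinberg (\ref{st}) then yields
$$\cha\bigl(L_p(\nu^0) \otimes \Delta_p(\nu^1)^{(r)}\bigr) = \sum_\sigma [\Delta_p(\nu^1):L_p(\sigma)]\,\cha L_p(\nu^0 + p^r\sigma),$$
identifying the composition factors of the subquotient as the $L_p(\nu^0 + p^r\sigma)$ with $\sigma$ a composition factor weight of $\Delta_p(\nu^1)$. Hence $\mu = \nu^0 + p^r\sigma$ for some such $\sigma$. If $\sigma = \nu^1$ then $\mu = \nu$ and the conclusion is immediate from Theorem \ref{filt}. Otherwise $\sigma < \nu^1$ strictly, and from $\lambda - \mu = (\lambda - \nu) + p^r(\nu^1 - \sigma) \geq p^r(\nu^1 - \sigma) > \nu^1 - \sigma$ the inductive hypothesis applied to $\Delta_p(\nu^1)$ with composition factor $L_p(\sigma)$ gives that $\sigma$ is strongly linked to $\nu^1$.

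The main obstacle will be the lifting step: to promote $\sigma$ strongly linked to $\nu^1$ in $W_p$ into $\nu^0 + p^r\sigma$ strongly linked to $\nu$, after which transitivity with $\nu$ strongly linked to $\lambda$ closes the proof. The subtlety is that a single $W_p$-reflection on $\sigma$ does not in general correspond to a single $W_p$-reflection on $\nu^0 + p^r\sigma$, since the natural candidate produces a residual shift $\langle \nu^0,\alpha^\vee\rangle\alpha$ that spoils $p$-divisibility. My workaround is to lift each step $\sigma \mapsto \sigma' = s_{\alpha,np}\cdot\sigma$ with $\sigma' - \sigma = c\alpha$ to a composition of two reflections $s_{\alpha,m_2} s_{\alpha,m_1} \in W_p$, which acts on $\nu^0 + p^r\sigma$ as translation by $(m_2 - m_1)\alpha$: taking $m_2 - m_1 = p^r c \in p\Z$ (using $r \geq 1$) and choosing $m_1 \in p\Z$ in the length-$p^rc$ interval $[\langle \nu^0 + p^r\sigma + \rho,\alpha^\vee\rangle,\, \langle \nu^0 + p^r\sigma + \rho,\alpha^\vee\rangle + p^rc]$ (nonempty since its length is at least $p$) will make both intermediate reflection steps strictly increasing. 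Iterating this two-step lift along the chain witnessing the linkage of $\sigma$ to $\nu^1$ delivers the required strong linkage.
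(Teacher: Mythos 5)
Your argument is correct, and its skeleton is the same as the paper's: both invoke Theorem \ref{filt}, use additivity of $\cha$ together with Steinberg's tensor product theorem (\ref{st}) to identify the composition factors of a layer $L_p(\nu^0)\otimes\Delta_p(\nu^1)^{(r)}$ as the modules $L_p(\nu^0+p^r\sigma)$ with $[\Delta_p(\nu^1):L_p(\sigma)]\neq 0$, and finish by transitivity of strong linkage. Where you diverge is the final lifting step. The paper (taking $r=1$) passes directly from $\sigma\leq\nu^1$ to the assertion that $\nu^0+p\sigma$ is strongly linked to $\nu^0+p\nu^1$; this is the standard $G_1T$-type lemma that the order relation lifts to strong linkage under $\omega+p(-)$ (cf.\ \cite{RAG}, II.9), which the paper does not reprove. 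You instead first upgrade $\sigma\leq\nu^1$ to ``$\sigma$ strongly linked to $\nu^1$'' by induction on the height of $\lambda-\mu$ --- legitimate, since $\nu^1-\sigma$ is strictly smaller than $p^r(\nu^1-\sigma)\leq\lambda-\mu$ --- and then lift the witnessing chain one reflection at a time. Your two-reflection computation (an up-step $\sigma\mapsto\sigma+c\beta$ lifts to $s_{\beta,m_2}s_{\beta,m_1}$ with $m_1$ a multiple of $p$ chosen in an interval of length $p^rc\geq p$, $m_2=m_1+p^rc$, so that both intermediate steps go up) checks out and is exactly the elementary content that also underlies the paper's unproved assertion; so your route is more self-contained, at the cost of an induction the paper's formulation avoids.
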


\begin{proof} Let $0 = F_0 \subset F_1 \subset \cdots \subset F_s = \Delta_p(\lambda)$ be a filtration as in the $r=1$ case of Theorem \ref{filt} with $F_i/F_{i-1} \simeq L_p(\mu_i^0) \otimes \Delta_p(\mu_i^1)^{(1)}, i = 1, \cdots , s$. 
Then the additivity  of $\cha $ gives
$$ \cha \Delta_p(\lambda) = \sum_{i=1}^s \cha (L_p(\mu_i^0) \otimes \Delta_p(\mu_i^1)^{(1)}).$$
As each $\mu_i$ is strongly linked to $\lambda$ (by Theorem \ref{filt}) we are done if we prove
$$ [L_p(\mu_i^0) \otimes \Delta_p(\mu_i^1)^{(1)} :L_p(\mu)] \neq 0 \text { implies } \mu \text { is strongly linked to } \mu_i.$$
However, the composition factors of $L_p(\mu_i^0) \otimes \Delta_p(\mu_i^1)^{(1)}$ have the form  $L_p(\mu_i^0) \otimes L_p(\nu)^{(1)}$ where $L_p(\nu)$ is a composition factor of $\Delta_p(\mu_i^1)$. The last condition implies $\nu \leq \mu_i^1$ so that we have $\mu_i^0 + p\nu$ is strongly linked to $\mu_i^0 + p\mu_i^1 = \mu_i$.
\end{proof} 

\subsection{The duality functor}
Recall that just as in ordinary category $\mathcal O$ we have a duality functor $D: \mathcal O_p \to \mathcal O_p$, see \cite{An22}, Remark 4.11. As $D$ preserves weight spaces we have
$$ \cha DM = \cha M \text { for all } M \in \mathcal O_p.$$
and hence also 
$$ [DM: L_p(\lambda)] = [M : L_p(\lambda)] \text { for all } \lambda \in X.$$

As in \cite{An22} we write $\nabla_p(\lambda)$ instead of $D\Delta_p(\lambda)$. By the above observations we see that the strong linkage principle holds just as well for the dual Verma modules.

For later use we record the following result, cf. \cite{An22}, Lemma 8.7(2),
\begin{lem} \label{Delta,nabla}
For all $\lambda, \mu \in X$ we have $\Ext^1_{\mathcal O_p} (\Delta_p(\lambda), \nabla_p(\mu)) = 0$. 
\end{lem}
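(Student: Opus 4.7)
The plan is to show that every short exact sequence
$$0 \to \nabla_p(\mu) \to E \xrightarrow{\pi} \Delta_p(\lambda) \to 0$$
in $\mathcal O_p$ splits. I would handle this by direct lifting in the case $\lambda \not< \mu$, and reduce the opposite case $\lambda < \mu$ to this via the duality functor $D$ recalled just above.

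For the main case $\lambda \not< \mu$, since $\pi$ preserves weight spaces and restricts to a surjection of finite-dimensional spaces $E_\lambda \twoheadrightarrow \Delta_p(\lambda)_\lambda$, the highest weight generator $w$ of $\Delta_p(\lambda)$ lifts to some $\tilde w \in E_\lambda$. For any $x \in U_K^+$ of positive weight, the element $x\tilde w$ lives in $E_\nu$ for some $\nu > \lambda$ and maps under $\pi$ to $xw = 0$, so $x\tilde w \in \nabla_p(\mu)_\nu$. But the weights of $\nabla_p(\mu)$ are $\leq \mu$, and the combination $\nu > \lambda$ together with $\nu \leq \mu$ would force $\lambda < \mu$, contradicting the hypothesis. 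Hence $U_K^+ \tilde w = 0$, so $\tilde w$ defines a $B_K$-module map from the one-dimensional $B_K$-module $\lambda$ into $E$. By the universal property of $\Delta_p(\lambda) = U_K \otimes_{B_K} \lambda$, this extends to a $U_K$-homomorphism $s: \Delta_p(\lambda) \to E$ with $\pi \circ s = \mathrm{id}$, splitting the sequence.

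For the case $\lambda < \mu$, I would invoke the duality functor $D: \mathcal O_p \to \mathcal O_p$. Since $D$ is an exact contravariant self-equivalence with $D\Delta_p(\nu) = \nabla_p(\nu)$ and, by $D^2 \simeq \mathrm{id}$, also $D\nabla_p(\nu) = \Delta_p(\nu)$, applying $D$ to a short exact sequence representing a class in $\Ext^1_{\mathcal O_p}(\Delta_p(\lambda), \nabla_p(\mu))$ yields a short exact sequence $0 \to \nabla_p(\lambda) \to DE \to \Delta_p(\mu) \to 0$, producing an isomorphism
$$\Ext^1_{\mathcal O_p}(\Delta_p(\lambda), \nabla_p(\mu)) \;\cong\; \Ext^1_{\mathcal O_p}(\Delta_p(\mu), \nabla_p(\lambda)).$$
When $\lambda < \mu$, the right-hand side falls into the regime $\mu \not< \lambda$ already treated, so it vanishes.

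The only subtle point I anticipate is that the naive lifting argument genuinely fails when $\lambda < \mu$, since then $\nabla_p(\mu)$ carries weights strictly above $\lambda$ and one cannot a priori arrange $U_K^+ \tilde w = 0$; the duality $D$ is what repairs this asymmetry, so its role as an exact functor interchanging $\Delta_p$ and $\nabla_p$ (established in \cite{An22}) is the essential ingredient.
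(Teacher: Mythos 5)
Your argument is correct and complete: the weight comparison showing that the lifted highest weight vector $\tilde w$ is annihilated by the positive part when $\lambda \not< \mu$, followed by Frobenius reciprocity for $\Delta_p(\lambda)=U_K\otimes_{B_K}\lambda$, and the reduction of the remaining case $\lambda<\mu$ via the exact contravariant duality $D$ (which swaps $\Delta_p$ and $\nabla_p$ and hence identifies the two Yoneda Ext groups) is the standard proof of this vanishing. The paper itself gives no proof, merely citing \cite{An22}, Lemma 8.7(2), and your argument is essentially the one used there, so there is nothing to add.
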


\subsection{The decomposition of  $\mathcal O_p$ into linkage classes} \label{decomp}

Denote by $W_p$ the affine Weyl group corresponding to $R$. This group acts on $X$ by the dot action $w \cdot \lambda = w(\lambda + \rho) - \rho$, see \cite{RAG}, II.6.1. Clearly,  if $\mu$ is strongly linked to $\lambda$, then we have in particular $\mu \in W_p \cdot \lambda$.

Let $C = \{\nu \in X^+ | \langle \nu + \rho, \beta^\vee \rangle < p \text { for all } \beta \in R^+\}$ be the bottom alcove in $X^+$ and set
$$ \bar C = \{\nu \in X | 0 \leq \langle \nu + \rho, \beta^\vee \rangle \leq p \text { for all } \beta \in R^+\}.$$
Then $\bar C$ is a fundamental domain for the action of $W_p$ on $X$. If $\nu \in \bar C$ we define $\mathcal O_p(\nu)$ to be the full subcategory of $\mathcal O_p$ given by
$$ \mathcal O_p(\nu) = \{M \in \mathcal O_p | [M:L_p(\mu)] = 0 \text { unless } \mu \in W_p\cdot \nu \}.$$

 \begin{thm} \label{blocks}
 Let $M \in \mathcal O_p$. Then there exists a unique set $(M(\nu))_{\nu \in \bar C}$ of submodules of $M$ with 
 $$ M(\nu) \in \mathcal O_p(\nu) \text { for all } \nu \in \bar C \text { and } M = \bigoplus_{\nu \in \bar C} M(\nu).$$
\end{thm}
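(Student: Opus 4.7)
My plan is to define $M(\nu)$ as the largest submodule of $M$ lying in $\mathcal O_p(\nu)$, then show that the family $(M(\nu))_{\nu \in \bar C}$ assembles into the required direct sum decomposition. First I would record that $\mathcal O_p(\nu)$ is closed under subquotients and extensions by additivity of the multiplicities $[-:L_p(\mu)]$ from Section \ref{Comp. Fact.}, and closed under arbitrary sums via a lifting argument: a simple subquotient $L_p(\mu)$ of $\sum_i N_i$ (with each $N_i \in \mathcal O_p(\nu)$) comes with a highest weight generator that lies in a finite subsum $N_{i_1}+\cdots+N_{i_k}\in\mathcal O_p(\nu)$, forcing $\mu\in W_p\cdot\nu$. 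Hence $M(\nu):=\sum\{N\subset M : N\in\mathcal O_p(\nu)\}$ itself lies in $\mathcal O_p(\nu)$.

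Uniqueness and directness of the sum both reduce to $\Hom_{\mathcal O_p}(X,Y)=0$ whenever $X\in\mathcal O_p(\nu)$ and $Y\in\mathcal O_p(\nu')$ with $\nu\neq\nu'$: the image of any such morphism would have composition factors in both of the disjoint orbits $W_p\cdot\nu$ and $W_p\cdot\nu'$ (disjoint because $\bar C$ is a fundamental domain), hence has none and vanishes. It follows that $M(\nu)\cap\sum_{\nu'\neq\nu}M(\nu')=0$, giving directness, and that if $(M'(\nu))_\nu$ is any other such decomposition then the composite $M(\nu)\hookrightarrow M\twoheadrightarrow M'(\nu')$ vanishes for $\nu'\neq\nu$, forcing $M(\nu)=M'(\nu)$.

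The substantive step is to verify $M^+:=\bigoplus_\nu M(\nu)=M$. Suppose to the contrary that $M^+\subsetneq M$. Then $\bar M=M/M^+\in\mathcal O_p$ is non-zero and, by condition (2) of Definition \ref{O}, admits a maximal weight $\bar\mu$. Lift a non-zero $\bar v\in\bar M_{\bar\mu}$ to $v\in M_{\bar\mu}$; maximality forces $M_\mu\subset M^+$ for all $\mu>\bar\mu$, so $U_K^+v\subset M^+$. Setting $N=U_Kv$, one has $N/(N\cap M^+)\cong U_K\bar v$, which is a quotient of $\Delta_p(\bar\mu)\in\mathcal O_p(\nu_{\bar\mu})$, where $\nu_{\bar\mu}\in\bar C$ is the representative of $W_p\cdot\bar\mu$. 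Set $A:=(N\cap M^+)/(N\cap M(\nu_{\bar\mu}))$; it embeds into $\bigoplus_{\nu\neq\nu_{\bar\mu}}M(\nu)$, so its composition factors all lie outside $W_p\cdot\bar\mu$. Consider the short exact sequence
$$ 0\to A\to N/(N\cap M(\nu_{\bar\mu}))\to U_K\bar v\to 0.$$
If it splits, the lifted copy of $U_K\bar v$ in $N$ is a submodule $Y'\subset M$ lying in $\mathcal O_p(\nu_{\bar\mu})$ with non-zero image in $\bar M$, contradicting $Y'\subset M(\nu_{\bar\mu})\subset M^+$. For the splitting, I would pull back along $\Delta_p(\bar\mu)\twoheadrightarrow U_K\bar v$ and establish $\Ext^1_{\mathcal O_p}(\Delta_p(\bar\mu),A)=0$; the splitting descends to the original sequence because the kernel of $\Delta_p(\bar\mu)\twoheadrightarrow U_K\bar v$ has composition factors only in $W_p\cdot\bar\mu$ while $A$ has none, so any map between them vanishes. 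For simple $A=L_p(\mu)$ with $\mu\notin W_p\cdot\bar\mu$, the Ext vanishing comes from Lemma \ref{Delta,nabla} together with the sequence $0\to L_p(\mu)\to\nabla_p(\mu)\to Q\to 0$: the strong linkage principle applied to $\nabla_p(\mu)$ prevents $L_p(\bar\mu)$ from being a subquotient of $Q$, forcing $\Hom(\Delta_p(\bar\mu),Q)=0$ and hence $\Ext^1_{\mathcal O_p}(\Delta_p(\bar\mu),L_p(\mu))=0$.

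The main technical obstacle is extending this Ext-vanishing from simple targets to the typically infinite-length $A$. I would exploit that $N=U_Kv$ equals $U_K^-\cdot W$ for the finite-dimensional $B_K$-submodule $W=U_K^0U_K^+v$, so $N$ is a quotient of the induced module $U_K\otimes_{B_K}W$ which carries a finite $\Delta_p$-filtration. This transports to a finite filtration of $A$ whose subquotients are subquotients of Vermas in blocks different from $\nu_{\bar\mu}$; standard d\'evissage, combined with the simple-target Ext-vanishing and a direct-limit reduction over finite-length truncations controlled by the $\Delta_p$-filtration, should yield the required $\Ext^1_{\mathcal O_p}(\Delta_p(\bar\mu),A)=0$.
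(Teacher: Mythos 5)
Your global strategy differs from the paper's: the paper builds $M$ up by a filtration with highest weight subquotients (following \cite{DGK}) and splits off one block at a time by induction, whereas you define $M(\nu)$ as the maximal submodule in $\mathcal O_p(\nu)$ and argue by contradiction at a maximal weight of $M/M^+$. Both routes, however, funnel through the same key fact, namely $\Ext^1_{\mathcal O_p}(\Delta_p(\bar\mu),A)=0$ whenever $A$ has no composition factors in $W_p\cdot\bar\mu$ (this is the paper's Proposition \ref{Hom,Ext}(2)), and this is exactly where your argument has a genuine gap. The proposed reduction --- a finite filtration of $A$ controlled by a $\Delta_p$-filtration of $U_K\otimes_{B_K}W$, followed by ``a direct-limit reduction over finite-length truncations'' --- does not close. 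The filtration subquotients of $A$ are subquotients of Verma modules and are typically of \emph{infinite} length; modules in $\mathcal O_p$ are in general not unions of finite-length submodules (already $\Delta_p(0)$ for $\mathfrak{sl}_2$ is cyclic with infinitely many composition factors), so there is no supply of finite-length truncations to take a limit over, and even if there were, $\Ext^1$ out of $\Delta_p(\bar\mu)$ is not known to commute with the relevant colimits. The simple-target vanishing alone cannot be assembled into the statement for $A$.

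The missing ingredient is the local composition series of Proposition \ref{local}: a \emph{finite} filtration of $A$ whose subquotients are either simple --- where your $\nabla_p$/strong-linkage argument, i.e.\ (\ref{Delta,L}), applies --- or have no weights $\geq\bar\mu$. For the latter pieces one needs the further observation, absent from your proposal, that $\Ext^1_{\mathcal O_p}(\Delta_p(\bar\mu),N')=0$ for any $N'$ with no weights $\geq\bar\mu$: in an extension $0\to N'\to E\to\Delta_p(\bar\mu)\to 0$ the weight space $E_{\bar\mu}$ consists of highest weight vectors, and the universal property of the Verma module splits the sequence. This weight-based vanishing is what lets one discard the (possibly infinitely many) composition factors of $A$ lying below $\bar\mu$; without it no d\'evissage can terminate. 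A secondary, repairable slip: the splitting of $0\to A\to N/(N\cap M(\nu_{\bar\mu}))\to U_K\bar v\to 0$ produces a submodule of a \emph{quotient} of $N$, not of $N$ itself; you should pass to its preimage $\tilde Y\subset N$, which is an extension of $U_K\bar v$ by $N\cap M(\nu_{\bar\mu})$, hence lies in $\mathcal O_p(\nu_{\bar\mu})$, hence in $M^+$, and this gives the contradiction. The rest of your outline (maximality of $M(\nu)$, Hom-vanishing between distinct blocks, uniqueness) is sound.
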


The proof that we present below is obtained by specializing/simplifying the arguments in \cite{DGK}, Section 4. First we prove
\begin{prop} \label{Hom,Ext}
Let $\nu \in \bar C$ and $\mu \in W_p\cdot \nu$. If $N \in \mathcal O_p$ satisfies $[N:L_p(w \cdot \nu)] = 0$ for all $w \in W_p$ then
\begin{enumerate}
\item $\Hom_{\mathcal O_p} (\Delta_p(\mu), N) = 0$, 
\item $\Ext^1_{\mathcal O_p} (\Delta_p(\mu), N) = 0$ 
\end{enumerate}
\end{prop}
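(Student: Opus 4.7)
For part (1), I would apply the strong linkage principle (Corollary~\ref{SLP}) directly. Suppose $f\colon\Delta_p(\mu)\to N$ is non-zero. Its image $f(\Delta_p(\mu))$ is a non-zero quotient of $\Delta_p(\mu)$, and the unique simple quotient $L_p(\mu)$ of $\Delta_p(\mu)$ is also a simple quotient---hence a composition factor in the sense of Section~\ref{Comp. Fact.}---of $f(\Delta_p(\mu))$. Thus $[N:L_p(\mu)]>0$, and writing $\mu = w\cdot\nu$ this contradicts the hypothesis, so $f = 0$.

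For part (2), given an extension $0\to N\to E\xrightarrow{\pi}\Delta_p(\mu)\to 0$, my plan is to construct a splitting by first reducing to a convenient quotient of $E$. Since $\Delta_p(\mu)_\lambda = 0$ for every $\lambda\not\le\mu$, each weight space $E_\lambda$ with $\lambda\not\le\mu$ is contained in $N$; let $E^{>\mu}\subseteq E$ be the $U_K$-submodule generated by all such $E_\lambda$, so that $E^{>\mu}\subseteq N$. In the quotient $\bar E := E/E^{>\mu}$, every weight space at a weight $\not\le\mu$ vanishes, and the sequence descends to $0\to \bar N\to \bar E\to\Delta_p(\mu)\to 0$, where $\bar N := N/E^{>\mu}$ still has no $L_p(w\cdot\nu)$ as a composition factor (its composition factors form a subset of those of $N$). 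Any lift $\bar v\in\bar E_\mu$ of the generator $v_0$ is then automatically a highest weight vector: for any $u\in U_K^+$ of positive weight $\gamma > 0$ we have $u\bar v\in \bar E_{\mu+\gamma}=0$. The universal property of Verma modules yields a surjection $\Delta_p(\mu)\twoheadrightarrow U_K\bar v\subseteq \bar E$, and composing with $\pi$ gives an endomorphism of $\Delta_p(\mu)$ carrying $v_0$ to $v_0$, which must be the identity since $\operatorname{End}_{\mathcal O_p}(\Delta_p(\mu))=K$. Hence $U_K\bar v\cong\Delta_p(\mu)$, giving a splitting $\bar E = \bar N\oplus U_K\bar v$.

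The main obstacle is to lift this splitting of $\bar E$ back to a splitting of $E$. Letting $\Delta'\subseteq E$ denote the preimage of $U_K\bar v$, one must show the induced extension $0\to E^{>\mu}\to \Delta'\to\Delta_p(\mu)\to 0$ splits; but this is the same Ext-vanishing statement for the submodule $E^{>\mu}\subseteq N$, which also satisfies the hypothesis, so a naive iteration is circular. To break the circularity I would invoke Zorn's lemma: among all submodules $M\subseteq E$ with $M\cap N = 0$ choose a maximal one $M_0$. If $\pi(M_0)\neq\Delta_p(\mu)$, then $E/(M_0+N)$ is a non-zero quotient of $\Delta_p(\mu)$ and carries a highest weight vector whose weight lies in $W_p\cdot\nu$; carefully lifting this vector back to $E$ and analyzing the cyclic submodule it generates using part~(1) and the strong linkage principle should produce a strict enlargement $M_1\supsetneq M_0$ still disjoint from $N$, contradicting maximality. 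Making this final lifting argument precise---in particular, arranging that the enlargement still intersects $N$ trivially---is the delicate step, and parallels the construction in \cite{DGK}, Section~4.
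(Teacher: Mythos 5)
Your part (1) is correct and coincides with the paper's argument. Part (2), however, contains a genuine gap. The reduction to $\bar E=E/E^{>\mu}$ is sound as far as it goes (indeed $E^{>\mu}\subseteq N$, since each $E_\lambda$ with $\lambda\not\le\mu$ already lies in the submodule $N$, and your splitting of $\bar E\to\Delta_p(\mu)$ is correct), but, as you concede, it merely trades the extension of $\Delta_p(\mu)$ by $N$ for an extension of $\Delta_p(\mu)$ by $E^{>\mu}$, which is the same problem. The Zorn's lemma device you propose to break the circle is not carried out, and I do not believe it can be completed as sketched: after lifting a highest weight vector of $E/(M_0+N)$ to some $\tilde v\in E_\lambda$, the vector $\tilde v$ is a highest weight vector only modulo $M_0+N$, and nothing prevents $U_K\tilde v$ from meeting $N$ nontrivially. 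Correcting $\tilde v$ by an element of $N_\lambda$ so that it becomes an honest highest weight vector is precisely an $\Ext^1$-vanishing statement of the type you are trying to prove, so the circularity persists. Note also that your argument for the splitting of $\bar E$ never uses the hypothesis $[N:L_p(w\cdot\nu)]=0$; since that hypothesis is indispensable for (2) (the $\Ext^1$ in question is certainly nonzero for general $N$), it has to enter the proof at exactly the step you have left open.

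The missing ingredient is Lemma \ref{Delta,nabla}, i.e.\ $\Ext^1_{\mathcal O_p}(\Delta_p(\mu),\nabla_p(\lambda))=0$, which you never invoke. The paper combines it with your part (1) and the dual of Corollary \ref{SLP} to obtain $\Ext^1_{\mathcal O_p}(\Delta_p(\mu),L_p(\lambda))=0$ whenever $[N:L_p(\lambda)]\neq 0$: the short exact sequence $0\to L_p(\lambda)\to\nabla_p(\lambda)\to\nabla_p(\lambda)/L_p(\lambda)\to 0$ gives a surjection $\Hom_{\mathcal O_p}(\Delta_p(\mu),\nabla_p(\lambda)/L_p(\lambda))\to\Ext^1_{\mathcal O_p}(\Delta_p(\mu),L_p(\lambda))$, and the source vanishes by part (1) because all composition factors of $\nabla_p(\lambda)/L_p(\lambda)$ lie in $W_p\cdot\lambda$, which is disjoint from $W_p\cdot\nu$. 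The finiteness issue that your Zorn's lemma is meant to address is handled instead by a local composition series of $N$ at $\mu$ (Proposition \ref{local}): its subquotients are either simple with $[N:L_p(\lambda)]\neq 0$ (dealt with by the vanishing just described) or have no weights $\ge\mu$ (dealt with by the universal property of $\Delta_p(\mu)$, which is essentially your $\bar E$ observation), and additivity of $\Ext^1$ along the finite filtration finishes the proof. Replacing everything after your construction of $\bar E$ by this d\'evissage would close the gap.
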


\begin{proof}
$(1)$ follows from the observation that if $\Hom_{\mathcal O_p} (\Delta_p(\mu), N) \neq 0$ then $L_p(\mu)$ is a subquotient of $N$ contradicting our assumption on $N$. 

To prove $(2)$ we begin by noting that
\begin{equation} \label{Delta,L}
\Ext^1_{\mathcal O_p} (\Delta_p(\mu), L_p(\lambda)) = 0 \text { for all } \lambda  \text { for which } [N:L_p(\lambda)] \neq 0.
\end{equation}
In fact,  we get from Lemma \ref{Delta,nabla} a surjection $\Hom_{\mathcal O_p}(\Delta_p(\mu), \nabla_p(\lambda)/L_p(\lambda)) \to \Ext^1_{\mathcal O_p} (\Delta_p(\mu), L_p(\lambda))$. Then (\ref{Delta,L}) follows from the first part of our proposition combined with (the ``dual'' of)  Corollary \ref{SLP}.

Take now a local composition series
$$ 0 = N_0 \subset N_1 \subset \cdots \subset N_r = N$$
of $N$ at $\mu$, cf. Proposition \ref{local} and let $J$ be the subset of $\{1, 2, \cdots , r\}$ as in loc. cit. By (\ref{Delta,L}) we have $\Ext^1_{\mathcal O_p} (\Delta_p(\mu), N_i/N_{i-1}) = 0$ for all $i \in J$. However, we also have $\Ext^1_{\mathcal O_p} (\Delta_p(\mu), N_i/N_{i-1}) = 0$ for $i \notin J$ because by the universal property of Verma modules we have $\Ext^1_{\mathcal O_p} (\Delta_p(\mu), N') = 0$ for all $N' \in \mathcal O_p$ which have no weights bigger than $\mu$.
\end{proof}
\begin{cor}\label{vanishing} Let $\nu$ and $N$ be as in Proposition \ref{Hom,Ext}. Then
\begin{enumerate}
\item $\Hom_{\mathcal O_p}(M, N) = 0$ for all $M \in \mathcal O_p(\nu)$,
\item if $M \in \mathcal O_p(\nu)$ is a highest weight module then $\Ext^1_{\mathcal O_p} (M, N) = 0$.
\end{enumerate}
\end{cor}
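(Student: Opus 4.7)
Both parts rest on Proposition \ref{Hom,Ext} together with the strong linkage principle (Corollary \ref{SLP}) and the composition factor formalism from Section \ref{Comp. Fact.}. The plan is to settle (1) first and then feed it into (2) via a Verma presentation of $M$.

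For (1) I will argue by contradiction: suppose $\varphi \colon M \to N$ is a nonzero homomorphism. Feeding the short exact sequences $0 \to \Ker\varphi \to M \to \varphi(M) \to 0$ and $0 \to \varphi(M) \to N \to N/\varphi(M) \to 0$ into the additive invariant $\cha$ and using the uniqueness of the summable expansion $\cha(-) = \sum_\mu [-:L_p(\mu)] \cha L_p(\mu)$ recalled in Section \ref{Comp. Fact.}, the composition factor multiplicities are additive, so
$$ [\varphi(M):L_p(\mu)] \leq \min\{[M:L_p(\mu)],\ [N:L_p(\mu)]\} $$
for every $\mu \in X$. Since $\varphi(M) \neq 0$, some $[\varphi(M):L_p(\mu)]$ is positive; then $M \in \mathcal O_p(\nu)$ forces $\mu \in W_p \cdot \nu$, contradicting the hypothesis that $[N:L_p(w \cdot \nu)] = 0$ for every $w \in W_p$.

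For (2) let $\mu$ be the highest weight of $M$. The unique simple quotient $L_p(\mu)$ of $M$ is in particular a composition factor of $M$, so $M \in \mathcal O_p(\nu)$ gives $\mu \in W_p \cdot \nu$. By the universal property of Verma modules there is a surjection $\Delta_p(\mu) \twoheadrightarrow M$; call its kernel $K$. Applying $\Hom_{\mathcal O_p}(-, N)$ to $0 \to K \to \Delta_p(\mu) \to M \to 0$ produces the segment
$$ \Hom_{\mathcal O_p}(K, N) \to \Ext^1_{\mathcal O_p}(M, N) \to \Ext^1_{\mathcal O_p}(\Delta_p(\mu), N), $$
where the third term vanishes by Proposition \ref{Hom,Ext}(2). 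By Corollary \ref{SLP} every composition factor of $\Delta_p(\mu)$ lies in $W_p \cdot \mu = W_p \cdot \nu$, so $\Delta_p(\mu) \in \mathcal O_p(\nu)$ and hence $K \in \mathcal O_p(\nu)$ as well; part (1) then kills the first term and forces $\Ext^1_{\mathcal O_p}(M, N) = 0$.

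The only real subtlety is the additivity of the multiplicities $[-:L_p(\mu)]$ used in (1): since modules in $\mathcal O_p$ need not admit honest composition series (cf. Example \ref{ex1}), one has to extract this additivity from characters and summability rather than from a Jordan-H\"older argument. Once that point is in place both statements follow formally from the machinery established above.
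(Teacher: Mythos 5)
Your proof is correct. Part (2) is essentially the paper's argument verbatim: present $M$ as a quotient of $\Delta_p(\mu)$, use Corollary \ref{SLP} to place the kernel in $\mathcal O_p(\nu)$, and squeeze $\Ext^1_{\mathcal O_p}(M,N)$ between $\Hom_{\mathcal O_p}(K,N)$ (killed by part (1)) and $\Ext^1_{\mathcal O_p}(\Delta_p(\mu),N)$ (killed by Proposition \ref{Hom,Ext}(2)). For part (1) you take a mildly different route: the paper picks a maximal weight $\lambda$ of $\im(f)$, produces a nonzero map $\Delta_p(\lambda)\to N$, and invokes Proposition \ref{Hom,Ext}(1), whereas you argue directly from the additivity of the multiplicities $[-:L_p(\mu)]$ on short exact sequences, which indeed follows from the additivity of $\cha$ together with the uniqueness of the summable expansion in Section \ref{Comp. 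Fact.}. The two arguments are essentially equivalent --- Proposition \ref{Hom,Ext}(1) is itself proved by observing that a nonzero map out of $\Delta_p(\mu)$ forces $L_p(\mu)$ to be a subquotient of the target --- so you have merely unwound one layer; your version has the small advantage of bypassing Proposition \ref{Hom,Ext}(1) entirely, and you correctly isolate the one point needing care, namely that additivity of multiplicities must be extracted from characters and summability rather than from honest composition series.
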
 
\begin{proof}
To prove $(1)$ we consider a non-zero element  $f \in \Hom_{\mathcal O_p} (M, N)$. Pick a maximal weight $\lambda$ in $\im(f)$. Then $0 \neq \Hom_{\mathcal O_p} (\Delta_p(\lambda), \im(f)) \subset \Hom_{\mathcal O_p}(\Delta_p(\lambda), N)$. Note that $L_p(\lambda)$ is a subquotient of $\im(f)$ and hence also a subquotient of $M$. As $M \in \mathcal O_p(\nu)$ it follows that $\lambda \in W_p\cdot \nu$. Hence we have a contradiction to Proposition \ref{Hom,Ext} (1).

 We now turn to $(2)$. Denote the highest weight of $M$ by $\mu \in W_p\cdot \nu$ and let $R$ denote the kernel of the surjection $\Delta_p(\mu) \to M$. Then Proposition \ref{Hom,Ext} (2) gives us a surjection  $\Hom_{\mathcal O_p} (R, N) \to \Ext^1_{\mathcal O_p} (M, N)$. However, because of Corollary \ref{SLP} we have $R \in \mathcal O_p(\nu)$ and therefore $\Hom_{\mathcal O_p} (R, N) = 0$ by Proposition \ref{Hom,Ext} (1). 
\end{proof}
With this we are now ready to prove Theorem \ref{blocks}:
\begin{proof}
Let $M \in \mathcal O_p$. We first construct a ``highest weight module'' filtration of $M$ as follows. Choose first  a maximal weight $\lambda_1$ among the weights of $M$ together with a non-zero vector $v_1 \in M_{\lambda_1}$. Set $M_1 = U_K v_1$ and choose then a maximal weight $\lambda_2$ among the weights of $M/M_1$ together with a non-zero vector $v_2 \in (M/M_1)_{\lambda_2}$. Define $M_2$ as the preimage in $M$ of the submodule $U_K v_2 \subset M/M_1$. Continue this procedure to obtain a (possibly infinite) filtration
$$0 = M_0 \subset M_1 \subset M_2 \subset M_3 \subset \cdots \subset M,$$
which satisfies (compare \cite{DGK}, Proposition 3.1)
\begin{enumerate}
\item $M = \cup_{i} M_i$,
\item $M_i/M_{i-1}$ is a highest weight module with highest weight $\lambda_i$,
\item if $\lambda_i > \lambda_j$ then $i < j$,
\item for each $\lambda \in X$ there exists $r \in \Z_{\geq 0}$ such that $(M_r)_\lambda = M_\lambda$.
\end{enumerate}
We claim that the theorem holds for each term $M_i$ in this filtration. This is certainly true for the highest weight module $M_1$. In fact, Corollary \ref{SLP} ensures that $M_i/M_{i-1} \in \mathcal O_p(\nu_i)$ where $\nu_i$ is the unique element in $\bar C \cap W_p\cdot\lambda_i$. Suppose the claim is true for $i-1$, i.e. that there are submodules $M_{i-1}(\nu) \in \mathcal O_p(\nu)$ of $M_{i-1}$ with $M_{i-1} = \oplus_{\nu \in \bar C} M_{i-1}(\nu)$.
Then it follows from Corollary \ref{vanishing} that $M_i = ( \oplus_{\nu \in \bar C\setminus\{\nu_i \}} M_{i-1}(\nu)) \oplus M_i(\nu_i)$, where $M_i(\nu_i)$ is the extension $0 \to M_{i-1}(\nu_i) \to M_i(\nu_i) \to M_i/M_{i-1} \to 0$. Note that each $M_i(\nu)$ is the maximal submodule of $M_i$ belonging to $\mathcal O_p(\nu)$.

Set now $M(\nu) = \cup_i M_i(\nu), \; \nu \in \bar C$. It then follows that $M = \oplus_{\nu \in \bar C} M(\nu)$ as desired.
\end{proof}

\begin{rem}\label{rem} Let $\nu \in \bar C$. The component $\mathcal O_p(\nu)$ may then be described as the subcategory of all $U_k$-modules $M$ which satisfy
\begin{enumerate}
\item $M$ is a weight module,
\item $\dim M_\lambda < \infty$ for all $\lambda \in X$, and the set of weights of $M$ are bounded from above by some $\mu \in \nu + \Z R$,
\item $[M: L_p(\lambda)] = 0$ unless $\lambda \in W_p\cdot \nu$
\end{enumerate}
It is clear that if $M$ satisfies these conditions then $M \in \mathcal O_p(\nu)$. The only thing we have to prove to check the reverse statement is that the weights of $M$ are bounded by a single element in $\nu  + \Z R$. However, if $\lambda$ is a weight of $M$ then by (3) $\lambda$ is also a weight of $L_p(w \cdot \nu)$ for some $w \in W_p$. Then $\lambda \leq w \cdot\nu$, and since $w\cdot \nu \in \nu + \Z R$ we also get $\lambda \in \nu + \Z R$. Thus $\supp M \cap X(\leq \mu) = \emptyset$ unless $\mu \in \nu + \Z R$. 
As $M \in \mathcal O_p$ there exist finitely many $\mu_i \in X$ such that $\supp (\cha M) \subset \cup_i X(\leq \mu_i)$. By our observations we may assume that $\mu_i \in \nu + \Z R$ for all $i$. But if $\mu_1, \mu_2 \in \nu + \Z R$ we may write $\mu_2 = \mu_1 + \sum_j n_j \alpha_j$ for some $n_j \in \Z$. We note that then $\mu_1, \mu_2 \leq \mu_1 + \sum_j |n_j|\alpha_j$. It follows that we can find $\mu \in \nu + \Z R$ such that $\mu_i \leq \mu$ for all $i$. Hence this $\mu$ is an upper bound for the set of weights of $M$.

\end{rem}

\subsection{Translation functors} \label{trans}

The decomposition of $\mathcal O_p$ in Theorem \ref{blocks} allows us to define translation functors like those in category $\mathcal O$ and category $\mathcal C$, cf. \cite{RAG}, II.7.

Below we give the main definitions and state the main properties of translation functors. For proofs we refer to \cite{RAG}, Chapter II.7.

Let $C$ and $\bar C$ be as in Section \ref{decomp}. If $\lambda \in \bar C$ and $M \in \mathcal O_p$ then the summand of $M$ belonging to $ \mathcal O_p(\lambda)$ (when decomposing $M$ as in Theorem \ref{blocks}) is the maximal submodule $pr_\lambda (M)$ of $M$ with composition factors belonging to $\{L_p(w \cdot \lambda) \mid w \in W_p \}$. The functor $\mathcal O_p \to \mathcal O_p(\lambda)$ given by the assignment $M \mapsto pr_\lambda(M)$ on objects (and by the obvious restriction map on morphisms, cf. Corollary \ref{vanishing}) is denoted $pr_\lambda$.

\begin{defn} Suppose $\lambda, \mu \in \bar C$. Then the translation functor $T_\mu^\lambda : \mathcal O_p(\mu) \to \mathcal O_p(\lambda)$ is given by
$$ T_\mu^\lambda (M) =  pr_\lambda(M \otimes L_p((\lambda - \mu)^+))), \; M \in \mathcal O_p(\mu),$$
where $(\lambda - \mu)^+$ is the unique weight belonging to $X^+ \cap W (\lambda - \mu)$.

\end{defn}

\begin{rem} In \cite{RAG}, Chapter II.7 the corresponding translation functor is considered as the endofunctor on $\mathcal C$ given (in our notation) as the composite $T_\mu^\lambda \circ pr_\mu$. 
\end{rem}

Among the main properties of translation functors we have

\begin{enumerate}
\item $T_\mu^\lambda$ is an exact functor.
\item $T_\mu^\lambda$ is left and right adjoint to $T_\lambda^\mu$.
\vskip .5 cm
Suppose $\mu \in C$ and $w \in W_p$. Then
\vskip .5 cm
\item $T_\mu^\lambda(\Delta_p(w \cdot \mu)) \simeq \Delta_p(w \cdot \lambda)$ and $T_\mu^\lambda(\nabla_p(w \cdot \mu)) \simeq \nabla_p(w \cdot \lambda)$.
\item $T_\mu^\lambda(L_p(w \cdot \mu)) \simeq \begin{cases} {L_p(w \cdot \lambda) \text { if } w\cdot \lambda \in \widehat{w \cdot C},}\\ {0  \text { otherwise.}} \end{cases}$
\end{enumerate}
Here $\widehat{w \cdot C}$ denotes the upper closure of $w\cdot C$, see \cite{RAG}, II.6.2.

\begin{rem} \label{sing} There are more general versions of (3) and (4) applying to arbitrary facets of $C$, see \cite{RAG}, II.7.14
\end{rem}

We shall now show that the above properties of translation functors allow us when $p \geq h$ to reduce the problem of determining the irreducible characters in $\mathcal O_p$ to solving the same problem in $\mathcal O_p(0)$. When $p \geq h$ we have $0 \in C$. The linkage principle (Corollary \ref{SLP}) ensures then that if $w \in W_p$ then $[\Delta_p(w \cdot 0): L_p(\mu)] = 0$ unless $\mu = y \cdot 0$ for some $y \in W_p$. This means that we have $\cha \Delta_p(w \cdot 0) = \sum_{y \in W_p} a_{y,w} \cha L_p(y \cdot 0)$, where $a_{y,w} = [\Delta_p(w \cdot 0): L_p(y \cdot 0)]$. We note that $a_{y,w} = 0$ unless $y \cdot 0 \leq w \cdot 0$, and $a_{w,w} = 1$. Arguing as in Section \ref{Comp. Fact.} we can therefore write
$$\cha L_p (w \cdot 0) = \sum_{\{y \in W_p \mid y \cdot 0 \leq w \cdot 0 \}} c_{y,w} \cha \Delta_p(y \cdot 0),$$
where $(c_{y,w})_{y,w}$ is the inverse matrix of  $(a_{y,w})_{y,w}$, see Section \ref{Comp. Fact.}.

Using the above notation we then have

\begin{prop} \label{reg} Assume $ p \geq h$ and suppose $\lambda \in \bar C$ has $w \cdot \lambda \in \widehat {w \cdot C}$. Then  $\cha L_p (w \cdot \lambda) = \sum_y c_{y,w} \cha \Delta_p(y \cdot \lambda)$.
\end{prop}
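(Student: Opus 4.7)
The plan is to reduce to the regular case $\mu=0$ by applying the translation functor $T_0^\lambda$. Since $p\geq h$, we have $0\in C$, so the displayed formula immediately preceding the proposition gives
\[\cha L_p(w\cdot 0)=\sum_{\{y\in W_p\mid y\cdot 0\leq w\cdot 0\}} c_{y,w}\cha\Delta_p(y\cdot 0).\]
I would apply $\cha\circ T_0^\lambda$ to both sides and then simplify using properties (3) and (4) from the list preceding the proposition.

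For the first step I would argue that $T_0^\lambda$ can be passed through the sum. The functor $T_0^\lambda$ is exact by property (1) and is the composite of tensoring with the finite-dimensional module $L_p(\lambda^+)$ (which corresponds to $\star$-multiplication of characters) with the block projection $pr_\lambda$ (which extracts a direct summand and hence preserves characters). Using exactness together with the local composition series of Proposition \ref{local}, one transfers the character identity through $T_0^\lambda$ to obtain
\[\cha T_0^\lambda L_p(w\cdot 0)=\sum_y c_{y,w}\cha T_0^\lambda\Delta_p(y\cdot 0).\]
Property (3) then gives $T_0^\lambda\Delta_p(y\cdot 0)\simeq\Delta_p(y\cdot\lambda)$ for every $y$, and property (4) combined with the hypothesis $w\cdot\lambda\in\widehat{w\cdot C}$ gives $T_0^\lambda L_p(w\cdot 0)\simeq L_p(w\cdot\lambda)$. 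Substituting these identifications yields the asserted formula.

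The main obstacle I expect is the rigorous justification of commuting $T_0^\lambda$ with the possibly infinite summable family of characters in $\widehat{\Z[X]}$. I would handle this weight-by-weight: for each fixed $\mu\in X$, only finitely many $y$ with $c_{y,w}\neq 0$ can satisfy $\mu\leq y\cdot\lambda$, because $y\cdot 0\leq w\cdot 0$ confines $y$ to a set for which the supports of $\cha\Delta_p(y\cdot\lambda)$ are jointly bounded above by $w\cdot\lambda$, and interval finiteness in $X$ cuts this down to a finite contribution at weight $\mu$. Exactness of $T_0^\lambda$ then reduces the identity at weight $\mu$ to a genuine finite equality, which is the standard translation-principle calculation.
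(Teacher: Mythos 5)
Your proposal is correct and follows the same route as the paper, whose proof is the one-line observation that the claim follows from the exactness of $T_0^\lambda$ together with properties (3) and (4) of translation functors; you have simply supplied the details (including the weight-by-weight justification via local composition series) that the paper leaves implicit.
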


\begin{proof} This follows by the exactness of the translation functor $T_0^\lambda$ combined with its properties (3) and (4) above.
\end{proof}

\begin{rem} \begin{enumerate}
\item Note that when we apply Proposition \ref{reg} to the set of restricted weights we get that to determine the irreducible characters for all such weights it is enough to know the set $\{ \cha L_p(w\cdot 0) | w\cdot 0 \in X_1\}$. This is (for $R$ irreducible) a set of cardinality equal to $|W|/f$, see \cite{Hu} 4.9, where $f$ is the index of connection for $R$. Note that this number is (for $p \geq h$) much less than the number of weights in $X_1$ (which is $p^n$).
\item If $p < h$ then the more general versions referred to in Remark \ref{sing} gives a similar reduction to a single weight in each facet contained in $X_1$. 
\item The reductions in (1) and (2) then determine all irreducible characters in $\mathcal O_p$ via the Steinberg tensor product theorem, see Section 2. This generalizes the well known reductions in $\mathcal C$  (based on the classical Steinberg theorem for finite dimensional simple $G$-modules and the translation principles in $\mathcal C$).  
\end{enumerate}
\end{rem}

\section{Twisting functors} \label{twist0}
In this section we shall show that the twisting functors on $\mathcal O$, cf. \cite{Ark}, \cite{AL}, \cite{AS},  carry over to $\mathcal O_p$. In fact, we shall see that
twisting functors even have $\Z$-versions. This will be important for our applications in the next section.
\subsection{Twisting functors in characteristic $0$}
We start by briefly recalling the definition and some key properties of twisting functors on $\mathcal O$.

Consider the complex semisimple algebra $\mathfrak g$ from the introduction. We can write $\mathfrak g = \mathfrak n^- \oplus \mathfrak h \oplus \mathfrak n^+$,
where $\mathfrak h$ is a Cartan subalgebra, and $\mathfrak n^+$, respectively $\mathfrak n^-$, is the nilpotent Lie subalgebra corresponding to $R^+$, respectively $-R^+$. 
So we have $\mathfrak n^+= \bigoplus_{\beta \in R^+} \mathfrak g_\beta$, respectively $\mathfrak n^-= \bigoplus_{\beta \in R^+} \mathfrak g_{-\beta}$. We have correspondingly for the universal enveloping algebra $U = U(\mathfrak g) = U^-U^0U^+$ where $U^- = U(\mathfrak n^-), U^0 = U(\mathfrak h)$, and $U^+ =  U(\mathfrak n^+)$. These algebras are $X$-graded with finite dimensional graded pieces.  In particular we have
$U^- = \bigoplus_{\lambda \leq 0} U^-_\lambda$ where $\dim_\C U^-_{\lambda} = P(- \lambda)$.

Let $W$ denote the Weyl group for $R$. Then $W$ acts naturally on $\mathfrak g$. For each $w \in W$ we denote by  $\mathfrak n_w$ the Lie subalgebra of $\mathfrak n^-$ given by
$$ \mathfrak n_w = \mathfrak n^- \cap w^{-1}(\mathfrak n^+) = \bigoplus_{\{\beta \in R^+\mid w(\beta) < 0\}} \mathfrak g_{- \beta}.$$
We then set $N_w = U(\mathfrak n_w)$, the (complex) universal enveloping of $\mathfrak n_w$. This is a graded subalgebra of  $U^-$ and we have  $N_w = \bigoplus_{\{\lambda \leq 0 \mid w(\lambda) \geq 0\}} U^-_{\lambda}$. 

Let now $N_w^*$ denote the restricted dual of $N_w$, i.e. $N_w^* = \bigoplus_{\lambda \leq 0} ((N_w)_{\lambda})^*$. Then right, respectively left, multiplication by $N_w$ on itself give a left, respectively right $N_w$-module structure on $N_w^*$. We define the semiregular $(U,N_w)$-bimodule $S_w$ by
$$ S_w = U \otimes_{N_w} N_w^*.$$
The left $U$-structure is given by left multiplication on $U$ and the right $N_w$-structure comes from the right $N_w$-module structure on $N_w^*$.

It turns out that $S_w$ is also a right $U$-module and that as such we have $S_w \simeq N_w^* \otimes_{N_w} U$ with the right $U$-module structure on $N_w^* \otimes_{N_w} U$ coming from right multiplication on $U$. This allows us to define the twisting functor $T_w$ as follows.
\begin{defn} Let $w \in W$. Then $T_w$ is the endofunctor on the category of $U$-modules given by 
$$ T_w(M) =\; ^{\varphi_w}(S_w \otimes_U M),$$
where $\varphi_w$ denotes the twist by the automorphism on $U$ induced by the action of $w$ on $\mathfrak g$.
\end{defn}
Clearly, $T_w$ is right exact. The reason for the twist by $\varphi_w$ in this definition is that this makes the restriction of  $T_w$ to $\mathcal O$ into an endofunctor on the category $\mathcal O$ with the following properties, see  \cite{AL}, Section 6.2:

\begin{prop} \label{twist-prop} Let $w \in W$. Then
\begin{enumerate}
\item $T_w$ restricts to an endofunctor on $\mathcal O $.
\item $T_{ws} = T_w \circ T_s$ for all simple reflections $s$ with $ws > w$.
\item $\cha T_w \Delta(\lambda) = \cha \Delta (w \cdot \lambda)$ for all $\lambda \in X$.
\item $T_{w_0} \Delta(\lambda) \simeq \nabla(w_0 \cdot \lambda)$ for all $\lambda \in X$.
\end{enumerate}
\end{prop}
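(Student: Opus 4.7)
The plan is to establish the four properties in order, using each to bootstrap the next. The central object is the semiregular bimodule $S_w$, so the first task is to extract enough structural information about $S_w$ to control weights (for (1)) and tensor-product compositions (for (2)).

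For (1), I would analyze the left $U$-module structure of $S_w$ before the $\varphi_w$-twist. A PBW decomposition gives $U \simeq N^w \otimes U^0 \otimes U^+ \otimes N_w$ as a vector space, where $N^w$ is the enveloping algebra of the complement $\mathfrak n^- \cap w^{-1}(\mathfrak n^-)$ of $\mathfrak n_w$ inside $\mathfrak n^-$. Using this, $S_w = U \otimes_{N_w} N_w^*$ becomes, as a $\mathbb C$-vector space, a tensor product of $N^w \otimes U^0 \otimes U^+$ with $N_w^*$, whose weights lie in $\bigoplus_{\{\beta \in R^+ \mid w\beta < 0\}} \mathbb Z_{\geq 0}\, \beta$. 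For $M \in \mathcal O$ with weights bounded above by finitely many $\mu_1, \dots, \mu_r$, it follows that $S_w \otimes_U M$ has weights bounded above by the $\mu_i$ plus weights from $N_w^*$, and the weight spaces remain finite dimensional. After applying $\varphi_w$ these bounds become $w\mu_i + (\text{nonnegative combinations of roots that }w\text{ sends to }R^+)$, so local $\mathfrak n^+$-finiteness is automatic from the triangular decomposition and $T_w M \in \mathcal O$.

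For (2), the key input is a bimodule isomorphism $S_{ws} \simeq S_w \otimes_U {}^{\varphi_w}\!S_s$, valid whenever $\ell(ws) = \ell(w) + 1$. This rests on the root-theoretic identity $R^+ \cap (ws)^{-1}(-R^+) = (R^+ \cap w^{-1}(-R^+)) \sqcup w^{-1}(R^+ \cap s^{-1}(-R^+))$, which gives a factorization $N_{ws} \simeq N_w \cdot w^{-1}(N_s)$ inside $U^-$. Feeding this into the definitions and tracking the $w$-twist yields the desired bimodule identity, and then $T_{ws} M = {}^{\varphi_{ws}}(S_{ws} \otimes_U M) \simeq {}^{\varphi_w}({}^{\varphi_s}(S_w \otimes_U S_s \otimes_U M))$ rearranges to $T_w(T_s M)$. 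For (3), I would use (2) to reduce to the case of a simple reflection $s_\alpha$. Then $N_{s_\alpha}$ is the polynomial algebra generated by a single root vector $f_\alpha$, and $S_{s_\alpha}$ admits the Arkhipov description as an Ore localization of $U$ at powers of $f_\alpha$ modulo $U$. Applying this to $\Delta(\lambda) = U \otimes_B \lambda$ gives a module whose character is the reflection of $\cha \Delta(\lambda)$ across $s_\alpha \cdot \lambda$, and the subsequent twist by $\varphi_{s_\alpha}$ produces exactly $\cha \Delta(s_\alpha \cdot \lambda)$. Iteration via (2) gives the general case.

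For (4), (3) already delivers $\cha T_{w_0}\Delta(\lambda) = \cha \nabla(w_0 \cdot \lambda)$, so the content is structural: $T_{w_0}\Delta(\lambda)$ must be isomorphic to the dual Verma. The cleanest route is to combine two ingredients. First, the highest weight $w_0 \cdot \lambda$ occurs with multiplicity one, yielding a nonzero morphism $\Delta(w_0 \cdot \lambda) \to T_{w_0}\Delta(\lambda)$. Second, $T_{w_0}$ has a right adjoint (the Joseph functor, i.e. a shifted version of $T_{w_0}$ composed with the duality $D$), and a standard adjunction calculation shows $\Hom_{\mathcal O}(\Delta(\mu), T_{w_0}\Delta(\lambda)) \simeq \Hom_{\mathcal O}(\Delta(w_0\mu), \nabla(\lambda))$ up to the appropriate shift by $\rho$, which vanishes unless $\mu = w_0\cdot\lambda$ in which case it is one dimensional. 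This forces $T_{w_0}\Delta(\lambda)$ to have simple socle $L(w_0\cdot\lambda)$ and to be isomorphic to $\nabla(w_0\cdot\lambda)$ by the character matching.

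The main obstacle is the bimodule isomorphism $S_{ws} \simeq S_w \otimes_U {}^{\varphi_w}S_s$ underpinning (2): the compatibility of two semiregular tensor products with the $w$-twists requires careful bookkeeping of left/right actions, and minor sign or conjugation errors at this stage propagate into all of (3) and (4). Once (2) is correctly set up, the remaining steps are either routine Verma calculations or clean structural consequences of adjointness and character comparison.
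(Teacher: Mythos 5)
First, note that the paper does not actually prove this proposition: it is quoted verbatim from Andersen--Lauritzen (the text preceding it says ``see [AL], Section 6.2''), so your proposal has to be measured against the standard proofs there and in Andersen--Stroppel rather than against an argument in this paper. Your overall architecture (semiregular bimodule, composition of twists, localization for a simple reflection, freeness of $\Delta(\lambda)$ over $N_{w_0}$) is the right one, but two steps contain genuine gaps. The weight estimate you give for (1) does not close: the bound ``$\mathrm{wt}(S_w\otimes_U M)\subseteq \mathrm{wt}(N_w^*)+\mathrm{wt}(M)$'' is a set that is \emph{unbounded above} (the weights of $N_w^*$ are all nonnegative combinations of the inversion roots of $w$), and applying $\varphi_w$ does not rescue it, because $w$ does not preserve the order: $\mathrm{wt}(M)\subseteq\bigcup_i(\mu_i-\Z_{\geq 0}R^+)$ gives $w(\mathrm{wt}(M))\subseteq\bigcup_i\bigl(w\mu_i - \Z_{\geq 0}\,w(R^+)\bigr)$, and $w(R^+)\not\subseteq R^+$ for $w\neq e$, so the twisted weights of the $M$-factor are not bounded above by $w\mu_i$. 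The actual reason $T_wM\in\mathcal O$ is that the tensor product is taken \emph{over} $N_w$, which caps the growth in the inversion directions; the standard way to see this is to prove (2) first (it is a purely bimodule-theoretic statement), reduce (1) to a simple reflection $s=s_\alpha$, and there use the description $S_s\otimes_UM\simeq M_{(f_\alpha)}/M$ to check directly that weight spaces stay finite dimensional and become bounded above after the $\varphi_s$-twist. Relatedly, your root-theoretic identity for (2) is stated for the wrong side: with $\mathrm{Inv}(x)=\{\beta\in R^+\mid x\beta<0\}$ and $\ell(ws)=\ell(w)+1$ one has $\mathrm{Inv}(ws)=\mathrm{Inv}(s)\sqcup s\bigl(\mathrm{Inv}(w)\bigr)$, whereas the identity you wrote, $\mathrm{Inv}(w)\sqcup w^{-1}\mathrm{Inv}(s)$, computes $\mathrm{Inv}(sw)$ (check $W=S_3$, $w=s_2$, $s=s_1$). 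Since the whole point of (2) is the order of composition $T_{ws}=T_w\circ T_s$, this is not a harmless slip; it changes which factorization of $N_{ws}$, and hence which bimodule identity, you obtain.

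For (4), the adjunction route is at risk of circularity: computing the right adjoint $G_{w_0}\simeq DT_{w_0}D$ on Verma modules is essentially equivalent to statement (4) itself, and in any case ``$\cha N=\cha\nabla(\nu)$ plus simple socle $L(\nu)$'' does not by itself force $N\simeq\nabla(\nu)$ without a further argument (e.g.\ vanishing of $\Ext^1(\Delta(\mu),N)$ for all $\mu$, or an embedding into $\nabla(\nu)$). The standard and much shorter argument is direct: $N_{w_0}=U^-$ and $\Delta(\lambda)$ is free of rank one over $U^-$, so $S_{w_0}\otimes_U\Delta(\lambda)\simeq N_{w_0}^*\otimes_{N_{w_0}}\Delta(\lambda)\simeq (U^-)^*\otimes_{\C}\C_\lambda$ is the restricted dual of a Verma module, and the $\varphi_{w_0}$-twist identifies it with $\nabla(w_0\cdot\lambda)$. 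Parts (3) and the reduction to simple reflections via Arkhipov's localization are fine as sketched.
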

Here the order relation on $W$ appearing in (2) is the Bruhat order, and in (4) $w_0$ is as usual the longest element in $W$.

\subsection{Twisting functors over $\Z$} 
In this section we shall check that the construction of twisting functors has a version over $\Z$.

Choose a Chevalley basis for $\mathfrak g$. This is a basis $\{f_\beta, h_i, e_\beta \mid \beta \in R^+, i \in \{1, 2, \cdots , n\}\}$ for $\mathfrak g$ with particular nice properties. For instance, it makes 
$$ \mathfrak g_\Z = \spa_\Z\{f_\beta, h_i, e_\beta \mid \beta \in R^+, i \in \{1, 2, \cdots , r\}\}$$
into a Lie algebra over $\Z$. Among the relations we have $[e_{\alpha_i},f_{\alpha_i}] = h_i, i= 1, 2, \cdots, n$ (with $\{\alpha_1, \cdots, \alpha_n\}$ denoting the simple roots in $R^+$). Then the Kostant $\Z$-form $U_\Z \subset U$ is the $\Z$-subalgebra generated by the divided powers of $e_{\alpha_i}, f_{\alpha_i}, i = 1, 2, \cdots , n$. We have a PBW-type $\Z$-basis of $U_\Z$ consisting of 
products (in appropriate order) of divided powers of the basis-elements $f_\beta$ and $e_\beta$ together with the elements $\binom{h_i}{s} \in U^0_\Z = U_\Z(\mathfrak h)$.

In analogy with Section \ref{twist0}  for each $w \in W$ 
we then define $N_{w, \Z}$ as the graded $\Z$-subalgebra of $U_\Z^-$ generated by $\{f_\beta^{(s)} \mid \beta \in R^+, w(\beta) < 0, s \geq 0\}$. If $w = s_{i_r} \cdots s_{i_1}$ is a reduced expression of $w$ ($s_j$ being the reflection in $W$ associated with the simple root $\alpha_j$) 
and $\beta_j = s_{i_1} \cdots s_{i_{j-1}}(\alpha_{i_j})$, then $\{\beta_1, \cdots , \beta_r\} = \{\beta \in R^+ \mid w(\beta) < 0\}$ and if $\lambda \leq 0$ then the $\lambda$-graded piece of $N_{w,\Z}$ is a free $\Z$-module with basis $\{f_{\beta_r}^{(a_r)} \cdots f_{\beta_2}^{(a_2)}f_{\beta_1}^{(a_1)} \mid a_j \in \Z_{\geq 0}, \sum _{j=1}^r a_j \beta_j = - \lambda\}$. 

The above means that $N_\Z^* := \bigoplus_{\lambda > 0, \; w(\lambda) < 0} (N_{w,\Z})_{(-\lambda)})^*$  is also a free $\Z$-module. It is moreover, in analogy with $N_w^*$ above, a left and right $N_{w, \Z}$-module, so that we can define the semiregular  $(U_\Z, N_{w, \Z})$-bimodule by
$$ S_{w, \Z} = U_\Z \otimes_{N_{w, \Z}} (N_{w, \Z})^*.$$
In analogy with the situation in Section 4.1 we get that $S_{w, \Z}$ is also a right $U_\Z$-module and as such isomorphic to  $N^*_{w,\Z} \otimes_{N_{w, \Z}} U_\Z$. 
So this leads to the twisting functor $T_{w, \Z}$ on the category of $U_\Z$-modules given by
$$ T_{w, \Z} M = \; ^{\varphi_w}(S_{w, \Z} \otimes_{U_\Z} M),$$
where $\varphi_w$ now denotes the restriction of $\varphi_w$ to $\mathfrak g_\Z$. 

Consider the subcategory $\mathcal O_\Z$ consisting of $U_\Z$-modules $M$ which satisfy
\begin{enumerate}
\item $M$ has an $X$-grading, i.e. $M = \bigoplus_{\lambda \in X} M_\lambda$ as a $U_Z^0$-module, where $\binom{h_i}{s} m = \binom {\langle \lambda, \alpha_i^\vee\rangle}{s} m, \; m \in M_\lambda, i = 1, ,\cdots ,n, s \geq 0$.
\item All $M_\lambda$ are free over $\Z$ of finite rank and there exist $\mu_1, \cdots \mu_r \in X$ such that $M_\lambda = 0$ unless $\lambda \in X(\leq \mu_1) \cup \cdots \cup X(\leq \mu_r)$.
\end{enumerate}

Note that each $M \in \mathcal O_\Z$ has a well defined character $\cha M \in \widehat {\Z[X]}$ given by
$$ \cha M (\lambda) = \sum_{\mu \in X} rk(M_\lambda).$$
Clearly, $\mathcal O_\Z$ is not closed under taking quotients, because these may have torsion. In the next section we shall need to consider some purely torsion $\Z$-modules which occur as quotients of twisted Verma modules. For use in this context we make the following definition
\begin{defn} Let $N$ be a $U_\Z$-module which is $X$-graded just as the objects in $\mathcal O_\Z$ but with each graded piece $N_\lambda$ being a finite $\Z$-module. We assume that the set of weights of $N$ are bounded just as in (2) above. Then we define $\cha_p^t N \in \widehat{\Z[X]}$ by
$$ \cha_p^t N: \lambda \mapsto \nu_p(|N_\lambda|),$$
where $\nu_p (m) $ denotes the highest power of $p$ dividing $m \in \Z$.
\end{defn}
Clearly, $\cha_p^t$ is additive on exact sequences whose terms all have the same properties as $N$.
\vskip .5 cm
Let $\lambda \in X$ and set 
$$ \Delta_\Z(\lambda) = U_\Z \otimes_{B_\Z} \lambda.$$
We call this the Verma module over $\Z$ with highest weight $\lambda$. Clearly, $\Delta_\Z(\lambda) \in \mathcal O_\Z$ for all $\lambda$. We have also dual Verma modules over $\Z$ which we denote $\nabla_\Z(\lambda)$.

A direct analogue of Proposition \ref{twist-prop} is then 
\begin{prop} \label{twistZ} Let $w \in W$. Then
\begin{enumerate}
\item $T_{w, \Z}$ restricts to an endofunctor on $\mathcal O_\Z $.
\item $T_{ws, \Z} = T_{w, \Z} \circ T_{s, \Z}$ for all simple reflections $s$ with $ws > w$.
\item $\cha T_{w, \Z} \Delta_\Z(\lambda) = \cha \Delta_\Z (w \cdot \lambda)$ for all $\lambda \in X$.
\item $T_{w_0, \Z} \Delta_\Z(\lambda) \simeq \nabla_\Z(w_0 \cdot \lambda)$ for all $\lambda \in X$.
\end{enumerate}
\end{prop}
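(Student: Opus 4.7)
My plan is to lift the characteristic zero proof of Proposition \ref{twist-prop} from \cite{AL}, Section 6.2, to the integral setting. The guiding principle is that every object in the construction of $T_{w,\Z}$ admits an explicit PBW-type $\Z$-basis, so the characteristic zero arguments go through verbatim provided one tracks $\Z$-freeness at each step. I would treat (1) and (3) together, then (2), and finally (4).

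For (1) and (3), I would use the PBW decomposition of $U_\Z^-$ associated with a reduced expression $w = s_{i_r}\cdots s_{i_1}$ and the ordering of $R^+$ it induces. This produces a tensor factorization $U_\Z^- = N_{w,\Z} \otimes_\Z N'_{w,\Z}$ of $\Z$-modules, where $N'_{w,\Z}$ is spanned by divided-power monomials in the $f_\beta$ with $w(\beta) > 0$. Together with the PBW basis of $U_\Z$, this shows that $S_{w,\Z} = U_\Z \otimes_{N_{w,\Z}} N_{w,\Z}^*$ is isomorphic as a $U_\Z^0 U_\Z^-$-module to $N_{w,\Z}^* \otimes_\Z U^0_\Z \otimes_\Z N'_{w,\Z}$, hence $X$-graded with free, finite-rank graded pieces. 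The twist $\varphi_w$ relabels weights by the $w$-action on $X$. Tensoring with $M \in \mathcal O_\Z$ together with the weight-boundedness assumption on $M$ then gives (1). Applied to $M = \Delta_\Z(\lambda)$, the same description produces an explicit PBW-type $\Z$-basis for $T_{w,\Z}\Delta_\Z(\lambda)$ from which one reads off
$$ \cha T_{w,\Z}\Delta_\Z(\lambda) = e^{w\cdot\lambda}\cdot \prod_{\beta \in R^+}\sum_{k\geq 0} e^{-k\beta} = \cha \Delta_\Z(w\cdot\lambda), $$
where the contributions from $N_{w,\Z}^*$ correspond (after applying $\varphi_w$) to the roots $\beta \in R^+$ with $w(\beta)<0$ and those from $N'_{w,\Z}$ to the remaining positive roots.

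For (2), the key integral statement is the factorization $N_{ws,\Z} = N_{w,\Z} \cdot \varphi_w(N_{s,\Z})$ inside $U^-_\Z$, which holds because $\{\beta \in R^+\mid ws(\beta) < 0\} = \{\beta \in R^+\mid w(\beta) < 0\} \sqcup \{w(\alpha_s)\}$ whenever $\ell(ws) = \ell(w)+1$, and the integral PBW theorem provides the corresponding tensor factorization of $\Z$-modules. From this one deduces the $(U_\Z,U_\Z)$-bimodule isomorphism $S_{ws,\Z} \simeq S_{w,\Z} \otimes_{U_\Z} S_{s,\Z}$ exactly as in the characteristic zero argument of \cite{AL}, and composing with the twists $\varphi_w \circ \varphi_s = \varphi_{ws}$ yields the stated composition rule.

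For (4), I would combine (2) and (3) to reduce to constructing an explicit $U_\Z$-module isomorphism $T_{w_0,\Z}\Delta_\Z(\lambda) \simeq \nabla_\Z(w_0\cdot\lambda)$. The main obstacle I anticipate here is passing from the well-known characteristic zero isomorphism $T_{w_0}\Delta(\lambda) \simeq \nabla(w_0\cdot\lambda)$ to an integral one, since an a priori $\Q$-isomorphism need not restrict to an isomorphism of the underlying $\Z$-lattices. My approach would be to exhibit a natural $U_\Z$-equivariant morphism $T_{w_0,\Z}\Delta_\Z(\lambda) \to \nabla_\Z(w_0\cdot\lambda)$ from the antidominant weight generator of $\nabla_\Z(w_0\cdot\lambda)$ dual to the canonical lowest-weight vector, identify its image weight-by-weight via the PBW basis supplied by (3), and verify that it hits every free $\Z$-generator of $\nabla_\Z(w_0\cdot\lambda)$. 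Since by (3) both sides are $\Z$-free with equal rank in each weight space, a surjection between them is automatically an isomorphism, which closes the argument.
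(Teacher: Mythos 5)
The paper does not actually prove this proposition: it constructs $N_{w,\Z}$, $N_{w,\Z}^*$ and $S_{w,\Z}$ together with their free PBW-type $\Z$-bases and then simply asserts the statement as a ``direct analogue'' of Proposition \ref{twist-prop}. Your proposal is therefore precisely the intended argument spelled out: the integral PBW theorem gives $\Z$-free tensor factorizations at every stage, so the characteristic-zero proofs from \cite{AL} carry over. Parts (3) and (4) are handled correctly; in particular, your observation that a surjection between $\Z$-free modules of equal finite rank in each weight space is automatically an isomorphism is the right way to dispose of the lattice issue in (4).

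Two points need repair. First, the inversion-set identity you use for (2) is false as written: for $s=s_\alpha$ with $ws>w$ one has $\{\beta\in R^+\mid ws(\beta)<0\}=\{\alpha\}\sqcup s\bigl(\{\beta\in R^+\mid w(\beta)<0\}\bigr)$, not $\{\beta\in R^+\mid w(\beta)<0\}\sqcup\{w(\alpha)\}$ (test $W=S_3$, $w=s_1$, $s=s_2$: the left side is $\{\alpha_2,\alpha_1+\alpha_2\}$, your right side is $\{\alpha_1,\alpha_1+\alpha_2\}$). The resulting factorization is $N_{ws,\Z}=\varphi_s(N_{w,\Z})\cdot N_{s,\Z}$ rather than $N_{w,\Z}\cdot\varphi_w(N_{s,\Z})$, and the twists then have to be placed exactly as in \cite{AL}, \cite{AS} for the composition to come out as $T_{w,\Z}\circ T_{s,\Z}$; carried out literally, your version of the bimodule isomorphism would not close up. Second, your argument for (1) only shows that $S_{w,\Z}$ has free graded pieces; the functor is $S_{w,\Z}\otimes_{U_\Z}(-)$, and a tensor product over $U_\Z$ (equivalently, via the right-module description, $N_{w,\Z}^*\otimes_{N_{w,\Z}}M$) can a priori acquire $\Z$-torsion even when $M$ is $\Z$-free, since $M$ need not be free over $N_{w,\Z}$. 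For $M=\Delta_\Z(\lambda)$ this is harmless because $\Delta_\Z(\lambda)\simeq U_\Z^-$ is free over $N_{w,\Z}$ by PBW, and that is all the paper uses in Section 5; but (1) in the stated generality requires a further argument for torsion-freeness that neither you nor the paper supplies.
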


\subsection{Twisting functors in characteristic $p$}

It is straightforward to pass from the integral twisting functor $T_{w, \Z}$ to its analogue on the category of $U_K$-modules. In fact, we have by definition that $U_K = U_\Z \otimes_\Z K$. Moreover, if we for $w \in W$ define $S_{w, K}$ in analogy with the procedure in Section \ref{twist0},  then we also have
$$ S_{w, K} = S_{w, \Z} \otimes_\Z K.$$
The twisting functor $T_{w, K}$ on the category of $U_K$-modules is then given by
$$ T_{w, K} M = \; ^{\varphi_w}(S_{w; K} \otimes_{U_K} M).$$
Again this is a right exact functor, it restricts to an endofunctor on $\mathcal O_p$, we have $T_{ws, K} = T_{w, K} \circ T_{s, K}$ for all simple reflections $s$ for which $ws > w$,  $\cha T_{w, K} \Delta_p(\lambda) = \cha \Delta_p(w \cdot \lambda)$, and $T_{w_0 ,K} \Delta_p(\lambda) \simeq \nabla_p(\lambda)$ for all $\lambda \in X$. In addition, we have

\begin{prop} The twisting functors on $\mathcal O_p$ preserve each of the subcategories $\mathcal O_p(\nu), \; \nu \in \bar C$.
\end{prop}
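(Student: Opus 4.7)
The plan is to reduce, via the right exactness of $T_{w,K}$ and the highest weight filtration constructed in the proof of Theorem \ref{blocks}, to the case of Verma modules, where the statement is immediate from the character identity $\cha T_{w,K}\Delta_p(\lambda)=\cha \Delta_p(w\cdot\lambda)$ and the strong linkage principle.

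For a Verma module $\Delta_p(\lambda)$ with $\lambda \in W_p\cdot\nu$, that identity combined with Corollary \ref{SLP} forces every composition factor $L_p(\mu)$ of $T_{w,K}\Delta_p(\lambda)$ to satisfy $\mu \in W_p\cdot(w\cdot\lambda) = W_p\cdot\nu$, so $T_{w,K}\Delta_p(\lambda) \in \mathcal{O}_p(\nu)$. If $N \in \mathcal{O}_p(\nu)$ is a highest weight module with highest weight $\lambda$, then $\lambda \in W_p\cdot\nu$ and $N$ is a quotient of $\Delta_p(\lambda)$; by right exactness $T_{w,K}N$ is a quotient of $T_{w,K}\Delta_p(\lambda)$, and since composition factor multiplicities are additive on short exact sequences, $\mathcal{O}_p(\nu)$ is closed under quotients, hence $T_{w,K}N \in \mathcal{O}_p(\nu)$.

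For a general $M \in \mathcal{O}_p(\nu)$, the proof of Theorem \ref{blocks} exhibits an ascending filtration $0 = M_0 \subset M_1 \subset M_2 \subset \cdots$ with $M = \bigcup_i M_i$ and each $M_i/M_{i-1}$ a highest weight module whose highest weight lies in $W_p\cdot\nu$. Applying the right exact functor $T_{w,K}$ to $0 \to M_{i-1} \to M_i \to M_i/M_{i-1} \to 0$ expresses $T_{w,K}M_i$ as an extension of $T_{w,K}(M_i/M_{i-1})$ by a quotient of $T_{w,K}M_{i-1}$. Induction on $i$, using that $\mathcal{O}_p(\nu)$ is closed under quotients and extensions, shows $T_{w,K}M_i \in \mathcal{O}_p(\nu)$ for every $i$.

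The main obstacle is passing to the limit, since $T_{w,K}$ is only right exact and the inclusions $M_{i-1} \hookrightarrow M_i$ need not remain injections after application of the functor. Because tensor product commutes with filtered colimits, $T_{w,K}M$ is the union of the images of the $T_{w,K}M_i$ in $T_{w,K}M$, each of which is a quotient of an object in $\mathcal{O}_p(\nu)$ and hence itself lies in $\mathcal{O}_p(\nu)$. Decomposing $T_{w,K}M = \bigoplus_{\nu' \in \bar C}(T_{w,K}M)(\nu')$ by Theorem \ref{blocks}, Corollary \ref{vanishing}(1) forces each such image to map to zero in $(T_{w,K}M)(\nu')$ for every $\nu' \neq \nu$, so each image is contained in $(T_{w,K}M)(\nu)$. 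Taking the union gives $T_{w,K}M = (T_{w,K}M)(\nu) \in \mathcal{O}_p(\nu)$, as desired.
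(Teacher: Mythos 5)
Your proof is correct and follows essentially the same route as the paper's: reduce via the character identity $\cha T_{w,K}\Delta_p(\lambda)=\cha\Delta_p(w\cdot\lambda)$ and the strong linkage principle to the case of Verma modules, then use right exactness to pass to quotients. The paper's own proof stops after treating $\Delta_p(\lambda)$ and $L_p(\lambda)$ and asserts ``the proposition follows''; your filtration-and-colimit argument for general $M$ is a careful justification of that last step, which does require some care since modules in $\mathcal O_p$ lack genuine composition series and $T_{w,K}$ is only right exact.
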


\begin{proof} Let $\nu \in \bar C$. If $w \in W$ and $\lambda \in W_p \cdot \nu$ then we have (as observed above) $\cha T_{w,K} \Delta_p(\lambda) = \cha \Delta_p(w\cdot \lambda)$. This implies that $T_{w,K} \Delta_p(\lambda) \in \mathcal O_p$. The right exactness of $T_{w, K}$ then gives that also $T_{w, K} L_p(\lambda) \in \mathcal O_p(\nu)$. The proposition follows.
\end{proof}

We say that $M \in \mathcal O_p$ has a $\Z$-form if there exists $M_\Z \in \mathcal O_\Z$ with $M_\Z \otimes_\Z K = M$. If $M$ has a $\Z$-form $M_\Z$  then it is clear from the definitions that we have
$$T_{w, K} M = T_{w, \Z} M_\Z \otimes _\Z K.$$ 
Note that if $\lambda \in X$ then $\Delta_p(\lambda)$ has a $\Z$-form, namely $\Delta_\Z(\lambda)$.  Likewise $\nabla_\Z(\lambda)$ is a $\Z$-form of $\nabla_p(\lambda)$.

\section{Filtrations and sum formulas for antidominant Verma modules in $\mathcal O_p$}
As an application of the twisting functors introduced in Section 5 we shall in this section use them to construct Jantzen-type filtrations of all antidominant Verma modules in $\mathcal O_p$ and prove that they satisfy sum formulas analogous to the Weyl modules for $G$. Our approach is related to the one used in \cite{AL}  for category $\mathcal O$, but since we are in characteristic $p$ there are closer similarities to the modular case in \cite{A83} (or \cite{RAG}, Chapter II.8).

\subsection{The $\mathfrak{sl}_2$ case} \label{sl2}
We begin with the case $\mathfrak g = \mathfrak {sl}_2$. Here we choose the usual basis $\{f, h, e\}$ for this Lie algebra. This is also a Chevalley basis of $\mathfrak g_\Z$ and we shall work with the corresponding divided power basis of $U_\Z$. The Weyl group has order $2$ and we denote its generator by $s$.

Let $\lambda \in X = \Z$ and consider the twisted Verma module $T_{s, \Z} \Delta_\Z(\lambda)$ which we by Proposition \ref{twistZ} can identify with $\nabla_\Z(-\lambda -2)$. Write $\mu = - \lambda -2$ and observe that since $\mu$ is the maximal weight of $\nabla_\Z(\mu)$  we have $\Hom_{\mathcal O_\Z} (\Delta_\Z(\mu), T_{s, \Z} \Delta_\Z(\lambda)) \simeq \Z c_\mu$ for some (unique up to sign) homomorphism $c_\mu : \Delta_\Z(\mu) \to T_{s, \Z} \Delta_\Z(\lambda) \simeq \nabla_\Z(\mu)$.  

We now choose a generator $v_0$ of $\Delta_\Z(\mu)_\mu$. As this weight space equals $\Z$ we have that $v_0$ is unique up to sign. Then for $i \geq 0$ we set $v_i = f^{(i)}v_0$, so that $\{v_i| i \geq 0\}$ is a basis of $\Delta_\Z(\mu)_\mu$ and we have a corresponding dual basis $\{v^*_i \mid  i \geq 0\}$ of $\nabla_\Z(\mu)$. An elementary computation shows then that $c_\mu$ is given by (up to sign)
$$ c_\mu: v_i \mapsto \binom{\mu}{i} v_i^*, \; i \geq 0.$$

Let $C(\mu)$ denote the cokernel of $c_\mu$. The above formula for $c_\mu$ shows that $C(\mu) = \bigoplus_{i\geq 0} C(\mu)_{\mu - 2i}$ with 
\begin{equation} \label{sum sl_2} C(\mu)_{\mu - 2i} \simeq \Z/(\binom{\mu}{i})\Z.
\end{equation}
as $\Z$-modules.
\begin{lem} Suppose $\mu < 0$ and set $\lambda = -\mu -2$. Then  
$$ \cha_p^t C(\mu) = \sum_{i> 0} \nu_p(\binom{\mu}{i}) e^{\mu - 2i} =  \sum_{a > \lambda +1} \nu_p(a) \cha \Delta_p(\lambda - 2a) - \sum_{b > 0} \nu_p(b) \cha \Delta_p(\mu - 2b).$$
\end{lem}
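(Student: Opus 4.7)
The plan is to verify the two equalities separately, with the first being essentially definitional and the second reducing to an identity about $p$-adic valuations of binomial coefficients.

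For the first equality, I would just apply the definition of $\cha_p^t$ to the isomorphism (\ref{sum sl_2}): since $C(\mu)_{\mu - 2i} \simeq \Z/\binom{\mu}{i}\Z$ and $\binom{\mu}{i} \neq 0$ for every $i \geq 0$ when $\mu < 0$, we have $|C(\mu)_{\mu - 2i}| = |\binom{\mu}{i}|$ and hence $\cha_p^t C(\mu)(\mu - 2i) = \nu_p(\binom{\mu}{i})$. The $i = 0$ contribution vanishes because $\binom{\mu}{0} = 1$, so only $i > 0$ appears.

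For the second equality, I would first note that for $\mathfrak{sl}_2$ one has $\cha \Delta_p(\nu) = \sum_{j \geq 0} e^{\nu - 2j}$. Before expanding I would check summability: for $a \geq \lambda+2 = -\mu$ the weights of $\Delta_p(\lambda - 2a)$ lie in $X(\leq \mu - 2)$, and similarly for the $b$-sum with $b \geq 1$ one gets support in $X(\leq \mu - 2)$, so both families are summable, and moreover each fixed weight $\mu - 2i$ receives contributions from only finitely many $a$ and $b$. The coefficient of $e^{\mu - 2i}$ on the right-hand side is then
\[
\sum_{a = -\mu}^{\,i-\mu-1} \nu_p(a) \;-\; \sum_{b=1}^{i} \nu_p(b),
\]
obtained by solving $\lambda - 2a - 2j = \mu - 2i$ (giving $j = i - \mu - 1 - a \geq 0$, i.e.\ $a \leq i-\mu-1$) and $\mu - 2b - 2k = \mu - 2i$ (giving $k = i - b \geq 0$, i.e.\ $b \leq i$).

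The remaining task is the elementary identity
\[
\nu_p\!\binom{\mu}{i} \;=\; \sum_{a=-\mu}^{i-\mu-1} \nu_p(a) \;-\; \sum_{b=1}^{i} \nu_p(b),
\]
which follows immediately from $\binom{\mu}{i} = (-1)^i \frac{(-\mu)(-\mu+1)\cdots(-\mu+i-1)}{i!}$ by applying $\nu_p$ (a sign does not affect the valuation). I do not expect any serious obstacle: the only thing that requires care is the bookkeeping between the parameters $\lambda = -\mu - 2$ and the shift of indices between the two presentations of $\binom{\mu}{i}$, but this is purely routine.
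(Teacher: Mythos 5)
Your proposal is correct and follows the same route as the paper's (two-line) proof: the first equality is read off from the description of the cokernel $C(\mu)_{\mu-2i}\simeq \Z/\binom{\mu}{i}\Z$, and the second is obtained by expanding the Verma characters $\cha\Delta_p(\nu)=\sum_{j\geq 0}e^{\nu-2j}$ and comparing coefficients of $e^{\mu-2i}$. The details you supply (the index bookkeeping giving $\sum_{a=-\mu}^{i-\mu-1}\nu_p(a)-\sum_{b=1}^{i}\nu_p(b)=\nu_p\bigl(\binom{\mu}{i}\bigr)$, and the summability check) are exactly what the paper leaves implicit.
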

\begin{proof} The first equality is immediate from (\ref{sum sl_2}) and the second drops out when we write the Verma characters in terms of the $e^{\mu -2i}$'s.
\end{proof}

\subsection{The minimal parabolic case}

We return to general $\mathfrak g$ and fix a simple root  $\alpha$.  Associated with $\alpha$ we have the minimal parabolic subalgebra $\mathfrak p_\alpha = \mathfrak g_{- \alpha} \oplus \mathfrak b$.

Consider $\lambda \in X$ and set $\mu = s \cdot \lambda$ where $s = s_\alpha$ denotes the simple reflection in $W$ associated with $\alpha$. 
Then in analogy with the $\mathfrak sl_2$ case in the previous section we take a generator $c_\mu^\alpha$ for the hom-space from $\Delta_{\alpha, \Z} (\mu)$ to $T_{s,\Z} \Delta_{\alpha, \Z}(\lambda)$, 
where for any $\nu \in X$ we have denoted by $\Delta_{\alpha, \Z} (\nu)$ the Verma module for $U_\Z(\mathfrak p_\alpha)$, i.e.  $\Delta_{\alpha, \Z} (\nu) = U_\Z(\mathfrak p_\alpha) \otimes_{B_\Z} \nu$.  

If $\langle \mu, \alpha^\vee \rangle < 0$ then we have with $\lambda = s \cdot \mu$ (using the same computations as in Section \ref{sl2}) a short exact sequence
\begin{equation} \label{alphases}  0 \to \Delta_{\alpha, \Z}(\mu) \to T_{s, \Z} \Delta_{\alpha, \Z}(\lambda) \to C_\alpha(\mu) \to 0
\end{equation}
where the first map is $c_\mu^\alpha$ and $C_\alpha(\mu)$ is a torsion module with 
$$ \cha_p^t C_\alpha(\mu) =  \sum_{a > - \langle  \mu +\rho, \alpha^\vee \rangle} \nu_p(a) \cha \Delta_{\alpha, p}(\lambda - a\alpha) - \sum_{b > 0} \nu_p(b) \cha \Delta_{\alpha,p}(\mu - b\alpha).$$
Here $\Delta_{\alpha, p}(\nu)$ is the Verma module for $U_K(\mathfrak p_\alpha)$ with highest weight $\nu$.
 
 When we apply the exact functor $U_\Z \otimes_{U_\Z(\mathfrak p_\alpha)} - $ to (\ref {alphases}) we get the short exact sequence
 \begin{equation} \label{globalses} 0 \to  \Delta_\Z(\mu) \to T_{s, \Z} \Delta_\Z(\lambda) \to C_s(\mu) \to 0
 \end{equation}
 where $C_s(\mu) = U_\Z \otimes_{U_\Z(\mathfrak p_\alpha)} C_\alpha(\mu)$ is a torsion module with 
 \begin{equation} \label{globalch} \cha_p^t C_s(\mu) =  \sum_{a > -\langle \mu + \rho, \alpha^\vee \rangle} \nu_p(a) \cha \Delta_p(\lambda - a\alpha) - \sum_{b > 0} \nu_p(b) \cha \Delta_{p}(\mu - b\alpha).
 \end{equation}
 Note that we have seriously abused notation: in (\ref{alphases}) the symbol $T_{s, \Z}$ denotes the twisting functor on $U_\Z(\mathfrak p_\alpha)$-modules  while in (\ref{globalses}) the same symbol denotes the twisting functor on $U_\Z$-modules.
 
 \subsection{The general case} \label{gencase}
 Consider now an element $w \in W$ for which $ws > w$. Then by Proposition \ref{twistZ}(2) we have $T_{ws, \Z} = T_{w, \Z} \circ T_{s, \Z}$ so that when we apply the right exact functor  $T_{w, \Z}$ to (\ref{globalses}) we obtain a  short exact sequence
 $$ 0 \to  T_{w, \Z}\Delta_\Z(\mu) \to T_{ws, \Z} \Delta_\Z(\lambda) \to T_{w,\Z}C_s(\mu) \to 0.$$
In fact, when tensored by $\C$ the two first terms are both isomorphic to the simple Verma module $\Delta(\mu) \in \mathcal O$. This gives the injectivity of the first map.  The last term in this sequence is a torsion module and when we  combine  Proposition \ref{twistZ}(3) and (\ref{globalch}) we get 
 $$ 
 \cha_p^t T_{w, \Z} C_s(\mu) =  \sum_{a > - \langle \mu + \rho, \beta^\vee \rangle} \nu_p(a) \cha \Delta_p(ws\cdot \mu - a\beta) - \sum_{b > 0} \nu_p(b) \cha \Delta_{p}(w \cdot \mu - b \beta),
 $$
 where $\beta = w(\alpha) \in R^+$.
 
 \subsection{Filtrations of antidominant Verma modules in $\mathcal O_p$}
 The results in the preceding subsections lead to the following result.
 \begin{thm} \label{sum}Let $\mu \in X^-$. The Verma module $\Delta_p(\mu) \in \mathcal O_p$ has a filtration
 $$ \cdots \subset \Delta_p^2(\mu) \subset \Delta_p^1(\mu) \subset \Delta_p(\mu)$$
 in which the submodules $\Delta_p^j(\mu)$ have the following properties
 \begin{enumerate}
 \item $\Delta_p(\mu)/\Delta_p^1(\mu) \simeq L_p(\mu)$.
 \item  The family $(\cha \Delta_p^j(\mu))_{j \geq 1}$ is summable and we have
 $$ \sum_{j \geq 1} \cha \Delta_p^j(\mu) = \sum_{\beta \in R^+} (\sum_{a_\beta > - \langle \mu + \rho, \beta^\vee \rangle} \nu_p(a_\beta) \cha \Delta_p(s_\beta \cdot \mu - a_\beta \beta) - \sum_{b_\beta > 0} \nu_p(b_\beta) \cha \Delta_p(\mu - b_\beta \beta)).$$
 \end{enumerate}
 \end{thm}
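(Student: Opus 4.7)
My plan is to construct an integral embedding $\phi_\Z : \Delta_\Z(\mu) \hookrightarrow \nabla_\Z(\mu) = T_{w_0, \Z}\Delta_\Z(w_0\cdot \mu)$ with $\Z$-torsion cokernel by iterating the short exact sequence (\ref{globalses}) along a reduced expression for $w_0$, and then to extract the Jantzen-type filtration of $\Delta_p(\mu)$ from the $p$-adic valuation of $\phi_\Z$.

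Fix a reduced expression $w_0 = s_{i_1}\cdots s_{i_N}$, and set $u_k = s_{i_1}\cdots s_{i_k}$, $\beta_k = u_{N-k}(\alpha_{i_{N-k+1}}) \in R^+$, and $\eta_k = (s_{i_{N-k+1}}\cdots s_{i_N} w_0)\cdot \mu$. Then $\eta_0 = w_0\cdot\mu$, $\eta_N = \mu$, and $\eta_k = s_{i_{N-k+1}}\cdot \eta_{k-1}$. A short Weyl-group computation yields
\[
u_{N-k}\cdot \eta_k = \mu,\qquad u_{N-k+1}\cdot \eta_k = s_{\beta_k}\cdot \mu,\qquad \langle \eta_k+\rho,\, \alpha_{i_{N-k+1}}^\vee\rangle = \langle \mu+\rho,\, \beta_k^\vee\rangle.
\]
Since $\mu \in X^-$ makes $\mu+\rho$ antidominant in the linear sense, $\langle \mu+\rho, \beta_k^\vee\rangle \leq 0$ for every $\beta_k \in R^+$, so $\langle \eta_k, \alpha_{i_{N-k+1}}^\vee\rangle < 0$ and (\ref{globalses}) applies at step $k$. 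Applying $T_{u_{N-k}, \Z}$ and invoking $T_{u_{N-k+1}, \Z} = T_{u_{N-k}, \Z}\circ T_{s_{i_{N-k+1}}, \Z}$ from Proposition \ref{twistZ}(2) (legitimate because the partial product $u_{N-k+1}$ is reduced) gives an injection $M_k := T_{u_{N-k}, \Z}\Delta_\Z(\eta_k) \hookrightarrow M_{k-1} := T_{u_{N-k+1}, \Z}\Delta_\Z(\eta_{k-1})$ with torsion cokernel $T_{u_{N-k}, \Z}C_{s_{i_{N-k+1}}}(\eta_k)$; the injectivity is the same $\otimes_\Z \C$ argument used in Section \ref{gencase}, since both $M_k\otimes_\Z\C$ and $M_{k-1}\otimes_\Z\C$ have character $\cha \Delta(\mu)$ and $\Delta(\mu)$ is simple for antidominant $\mu$. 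Composing the $N$ inclusions yields $\phi_\Z : \Delta_\Z(\mu) = M_N \hookrightarrow M_0 = \nabla_\Z(\mu)$.

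Next I localize $\phi_\Z$ at $p$ and define $\Delta_p^j(\mu) \subseteq \Delta_p(\mu)$ as the mod-$p$ image of $\phi_\Z^{-1}(p^j \nabla_{\Z_{(p)}}(\mu))$. Weight-space by weight-space this is a standard finite-rank Smith-normal-form computation, so $(\cha\Delta_p^j(\mu))_{j\geq 1}$ is summable and
\[
\sum_{j\geq 1}\cha \Delta_p^j(\mu) \;=\; \cha_p^t\coker(\phi_\Z).
\]
Part (1) follows because $\Delta_p(\mu)/\Delta_p^1(\mu)$ is the image of the reduction $\bar\phi_\Z : \Delta_p(\mu)\to \nabla_p(\mu)$: as a non-zero quotient of $\Delta_p(\mu)$ it has simple head $L_p(\mu)$, and as a submodule of $\nabla_p(\mu)$ it sits inside the simple socle $L_p(\mu)$, forcing equality. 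For part (2), additivity of $\cha_p^t$ along the chain $M_N \subset M_{N-1} \subset \cdots \subset M_0$ together with the character formula for $\cha_p^t T_{w, \Z}C_s(\mu)$ from Section \ref{gencase} expresses $\cha_p^t\coker(\phi_\Z)$ as $\sum_{k=1}^N \cha_p^t T_{u_{N-k}, \Z}C_{s_{i_{N-k+1}}}(\eta_k)$; substituting the three identities above converts the $k$-th summand into precisely the $\beta_k$-contribution of the theorem, and since $\{\beta_k\}_{k=1}^N = R^+$ by the standard reduced-expression enumeration of positive roots, summing over $k$ reindexes as a sum over $\beta \in R^+$.

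The main obstacle is the Weyl-group bookkeeping across the iteration: keeping the indexing consistent with the convention $T_{ws, \Z} = T_{w, \Z}\circ T_{s, \Z}$ of Proposition \ref{twistZ}(2) at every step, and verifying that each integral module $M_k$ has free $\Z$-weight spaces of the correct finite rank, which is what licenses both the $\otimes_\Z\C$ injectivity argument and the weight-space-wise Smith-normal-form computation. Once these are in place, the Jantzen filtration formalism and the reindexing $\{\beta_k\} = R^+$ are routine.
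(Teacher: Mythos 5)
Your proposal is correct and follows essentially the same route as the paper: factor the canonical map $\Delta_\Z(\mu)\to\nabla_\Z(\mu)=T_{w_0,\Z}\Delta_\Z(w_0\cdot\mu)$ through twisted Verma modules along a reduced expression of $w_0$, use the rank-one computation of Sections 5.1--5.3 and additivity of $\cha_p^t$ to evaluate the torsion cokernel, and extract the filtration by pulling back $p^j\nabla_\Z(\mu)$. The only differences (localizing at $p$, the reversed indexing of the reduced expression, and the head/socle argument for part (1)) are cosmetic.
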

 
 \begin{proof}
 We have $\Hom_{\mathcal O_\Z}(\Delta_\Z(\mu), \nabla_\Z (\mu)) \simeq \Z$. Choose a generator $c(\mu)$ of this $\Z$-module. Note that $c(\mu)$ is unique up to sign and an isomorphism on the $\mu$ weight space. We then define for $j \geq 0$
 $$ \Delta_\Z^j(\mu) = c(\mu)^{-1}(p^j \nabla_\Z(\mu))$$
and 
$$\Delta_p^j(\mu) =\text { the  $K$-span of the image in } \Delta_p(\mu) = \Delta_\Z(\mu) \otimes_\Z K \text { of } \Delta_\Z^j(\mu).$$
If  $\overline{c(\mu)} = c(\mu) \otimes_\Z K$, then the image of $\overline {c(\mu)}$ equals $L_p(\mu)$ and we have $\Delta_p^1(\mu) = \ker(\overline{c(\mu)})$. Hence (1) is immediate.

To check (2) we shall now see that we may factor $c(\mu)$ into a composition of the homomorphisms occuring in the previous section. We let $w_0 = s_{i_1} s_{i_2} \cdots s_{i_N}$ be a reduced expression for the longest element in $W$ and set $y_j = s_{i_1} \cdots s_{i_j}, \; j=0, 1, ,\cdots, N$. Then $y_0 = 1$ and $y_N = w_0$. Moreover, $\beta_j = y_{j-1}(\alpha_{i_j}) \in R^+$ and in fact $R^+ = \{\beta_1, \cdots, \beta_N\}$.

By Section \ref{gencase} we have for each $j= 1,  \cdots, N$ a homomorphism 
$$ c_j(\mu): T_{y_{j-1}} \Delta_\Z(y_{j-1}^{-1} \cdot \mu) \to  T_{y_{j}} \Delta_\Z(y_{j}^{-1} \cdot \mu),$$
which is an isomorphism on the highest weight space (a rank $1$ space with weight $\mu$). Moreover, if $C_j(\mu)$ denotes the cokernel of $c_j(\mu)$ then $C_j(\mu)$ is a torsion $\Z$-module and we have
\begin{equation} \label{jcha} \cha_p^t C_j(\mu) = \sum_{a_j > -\langle \mu + \rho, \beta_j^\vee \rangle} \nu_p(a_j) \cha \Delta_p(s_{\beta_j} \cdot \mu - a_j \beta_j) - \sum_{b_j > 0} \nu_p(b_j) \cha \Delta_p(\cdot \mu - b_j \beta_j).
\end{equation}

The composite $c_N(\mu) \circ c_{N-1}(\mu) \circ \cdots c_1(\mu): \Delta_\Z(\mu) \to T_{w_0, \Z} \Delta_\Z(w_0\cdot \mu) \simeq \nabla_\Z(\mu)$ is an isomorphism on the $\mu$ weight space and hence coincides up to sign with $c(\mu)$. Via the short exact  sequences $0 \to T_{y_{j-1}} \Delta_\Z(y_{j-1}^{-1} \cdot \mu) \to  T_{y_{j}} \Delta_\Z(y_{j}^{-1} \cdot \mu) \to C_j(\mu) \to 0$, cf. Section (\ref{gencase}), we then get
$$\cha_p^t C(\mu) = \sum_{j=1}^N \cha_p^t C_j(\mu).$$
Inserting the formulas from (\ref{jcha}) we then see that $\cha_p^t C(\mu)$ coincides with the right hand side of (2). Now standard arguments as in \cite{A83} (or \cite{RAG}, II.8.18) give that $\cha_p^t C(\mu)$ is also equal to the left hand side of (2).
 \end{proof}
 
 \begin{rem} Note that the assumption that $\mu$ is antidominant in Theorem \ref{sum} is needed in the proof to ensure that the homomorphisms $c_j(\mu)$ are injective (and have cokernels which are torsion $\Z$-modules). In fact, the composite $c(\mu) : \Delta_\Z(\mu) \to \nabla_\Z(\mu)$ is injective iff $\mu$ is antidominant.
 \end{rem}
 
 \begin{exa}
 \begin{enumerate}
 \item If in Theorem \ref{sum} we take $\mu = - \rho$ then we see that the right hand side in the sum formula is zero (in fact, for each $\beta \in R^+$ the $\beta$ summand is $0$ as $s_\beta \cdot (-\rho) = -\rho$). This is therefore another proof of the fact, that $\Delta_p(-\rho)$ is irreducible, see \cite{An22}, Theorem 6.1. More generally,  in the sum formula corresponding to an arbitrary  $\mu \in X^-$ the only $\beta \in R^+$ that contribute non-trivially to the sum are those with $\langle \mu + \rho, \beta^\vee \rangle < 0$.
 
 \item Consider $\mathfrak g = \mathfrak{sl}_2$ and take $\mu = -\alpha$. We claim that the filtration in Theorem \ref{sum} in this case is $\Delta_p^j(-2) = \Delta_p(-2)^{(j)}$,  $j \geq 0$. This follows from the considerations in Section 5.2 in \cite{An22}. In fact, the sequence (5.8) there shows that $\Delta_p(-2)^{(1)} = \Delta_p^1(-2)$. To check that the same is true for all $j$ we just observe that the character sum formula for the submodules $(\Delta_p(-2)^{(j)})_{j \geq 1}$ coincides with the one for $(\Delta_p^{(j)}(-2))_{j \geq 1}$ given in Theorem \ref{sum} (2). Now all non-zero weight multiplicities in $\Delta_p(-2)$ are $1$. Hence any two filtrations by submodules with the same character sum must coincide.
 
 \end{enumerate}
 \end{exa}

 \section{Character formulas for indecomposable tilting modules}
 Recall from \cite{An22} that in $\mathcal O_p$ we have two kinds of tilting modules: (1) the finite dimensional ones  and (2) the $\infty$-dimensional ones defined as follows
 \begin{enumerate}
 \item A module $Q \in \mathcal C$ is called a tilting module if it has a filtration by Weyl modules as well as by dual Weyl modules,
 
 \item A module $Q \in \mathcal O_p$ is called $\infty$-tilting if it has a finite filtration by Verma modules as well as by dual Verma modules.
 \end{enumerate}

Both kinds of tilting modules split into indecomposable modules.  In this section we have collected some character formulas giving relations among indecomposable tilting modules. Our tilting character formulas are immediate consequences of results from \cite{An22} via our setup from the previous sections. Therefore we have just stated the results leaving the immediate proofs to the readers. In contrast to the situation in Section 2 our formulas does not reduce the problem of finding all indecomposable tilting characters to a finite set. We shall end the section (and the paper) by some remarks on how one  nevertheless have some useful reductions.

\subsection{Characters for indecomposable finite dimensional tilting modules}

Up to isomorphisms we have one indecomposable  tilting module $T_p(\lambda) \in \mathcal C$ for each $\lambda \in X^+$. The weights of $T_p(\lambda)$ are all less than or equal to $\lambda$, and $\lambda$ occurs with multiplicity $1$. These modules have been well studied and results can be found in the literature. For future use we just mention the following results
\begin{enumerate}
\item The linkage principle in $\mathcal C$ shows that if $L_p(\mu)$ is a composition factor (in the usual sense) of $T_p(\lambda)$ for some $\mu, \lambda \in X^+$ then $\mu \in W_p \cdot \lambda$.

\item Let $p \geq h$. Translation functor arguments show that in order to determine $\cha T_p(\lambda)$ for all $\lambda \in X^+$ it is enough to do so for $\lambda \in W_p \cdot 0 \cap X^+$. 

\item  Let $r \geq 1$ and $p \geq 2h-2$ . Then for  $\lambda \in (p^r - 1)\rho + X_r$  and arbitrary $\mu \in X^+$ we have $\cha T_p(\lambda + p^r \mu) = \cha T_p(\lambda) \star \cha^{(r)} T_p(\mu)$.

\end{enumerate}
The results in (1), (2) and (3) are ``classical'' and can all be found in \cite{RAG}, II.E.

\begin{rem} The result (3) above is called Donkin's tensor product theorem and was proved by him in \cite{Do}, Proposition 2.1. It may hold for smaller values of $p$ than the bound we have stated, but recently several examples by Bendel, Nakano, Pillen and Sobaje for $p=2$ and $p=3$, \cite{BNPS1} and \cite{BNPS2}, show that there are exceptions. In a very recent preprint \cite{BNPS3} these authors show that the bound on $p$ can in general be lowered to $2h-4$. 

We emphasize that although (3) gives the characters of indecomposable tilting modules for certain ``large'' $\nu \in X^+$ in terms of smaller weights it does not reduce the problem of determining all  tilting characters to a finite set of weights (as we saw was the case for irreducible characters in Section 2). There are large infinite subsets of $X^+$ consisting of weights that cannot be expressed as in (3).
\end{rem}

\begin{rem} Note that by the recent results in \cite{AMRW} and \cite{RW1} the characters of all finite dimensional tilting modules are determined by the $p$-Kazhdan-Lusztig polynomials!
\end{rem}

\subsection{Characters of indecomposable $\infty$-tilting modules}

As demonstrated in \cite{An22}, Theorem 7.5 we have in analogy with the finite dimensional case that up to isomorphisms we have one indecomposable tilting module $T^\infty_p(\lambda) \in \mathcal O_p$ for each $\lambda \in X^+ -\rho$. The weights of $T_p^\infty(\lambda)$ are all less than or equal to $\lambda$, and $\lambda$ occurs with multiplicity $1$. We also have
\begin{enumerate}
\item The linkage principle in $\mathcal O_p $ (combine Corollary \ref{SLP} with Theorem 8.8 in \cite{An22})  shows that if $L_p(\mu)$ is a composition factor of $T_p^\infty(\lambda)$ for some $\mu \in X, \lambda \in X^+ - \rho$ then $\mu \in W_p \cdot \lambda$

\item Let $p \geq h$. Translation functor arguments (from Section \ref{trans}) show that in order to determine $\cha T_p^\infty(\lambda)$ for all $\lambda \in X^+ - \rho$ it is enough to do so for $\lambda \in W_p \cdot 0 \cap (X^+ - \rho)$. 

\item  Let  $p \geq 2h-2$. If $r \geq 1$ and $\lambda \in X_r - \rho$  then $\cha T_p^\infty (\lambda) = \cha T_p(\lambda + p^r \rho) \star \cha^{(r)} \Delta_p(-\rho)$. This is an easy consequence of \cite{An22}, Theorem 7.7(2).

\end{enumerate}

\begin{rem} As the character of $\Delta_p(-\rho)$ is known (3) tells us that all $\infty$-tilting characters are determined by the finite dimensional tilting characters when $p \geq 2h-2$. This bound on $p$ can sometimes but not always be relaxed, see \cite{An22}, Remark 7.8. For all $p$ we have the inequality $\cha T_p^\infty (\lambda) \leq \cha T_p(\lambda + p^r \rho) \star \cha^{(r)} \Delta_p(-\rho)$ because of \cite{An22}, Theorem 7.7(1). Here the inequality $f \leq g$ between two maps $f, g: X \to \Z$ means $f(\nu) \leq g(\nu)$ for all $\nu \in X$.
\end{rem}

\vskip 1 cm
\end{document}